\newcommand{\frC}{\mathfrak{C}}
\newcommand{\fra}{\mathfrak{a}}
\newcommand{\frb}{\mathfrak{b}}
\newcommand{\frf}{\mathfrak{f}}
\newcommand{\Sine}{\mathcal{S}}
\newcommand{\N}{\mathbb{N}}
\newcommand{\Z}{\mathbb{Z}}
\newcommand{\Q}{\mathbb{Q}}
\newcommand{\R}{\mathbb{R}}
\newcommand{\C}{\mathbb{C}}
\newcommand{\Half}{\mathcal{H}}
\newcommand{\eps}{\varepsilon}
\newcommand{\uprec}{\prec_\mathit{u}}
\newcommand{\lprec}{\prec_\mathit{l}}
\newcommand{\abs}[1]{\lvert #1 \rvert}
\newcommand{\ucl}[1]{\overline{#1}^\mathit{u}}
\newcommand{\lcl}[1]{\overline{#1}^\mathit{l}}
\DeclareMathOperator{\Cl}{\mathit{Cl}}
\DeclareMathOperator{\re}{\mathrm{Re}}
\DeclareMathOperator{\gen}{\mathrm{gen}}
\DeclareMathOperator{\sign}{\mathrm{sign}}
\DeclareMathOperator{\ord}{\mathrm{ord}}
\numberwithin{equation}{section}
\newtheorem{thm}{Theorem}[section]
\newtheorem{prop}[thm]{Proposition}
\newtheorem{cor}[thm]{Corollary}
\newtheorem{lem}[thm]{Lemma}
\newtheorem{defn}[thm]{Definition}
\newtheorem{rem}[thm]{Remark}
\newenvironment{proof}
	{\textbf{Proof.}}
	{\hfill\rule{0.5em}{2ex}\par\bigbreak}
\begin{document}
\title
{On Shintani's ray class invariant for totally real number fields}
\author{Shuji Yamamoto
\thanks{Graduate School of Mathematical Sciences, The University of Tokyo, 
3-8-1 Komaba, Meguro, Tokyo, 153-8914 Japan}}
\date{}
\maketitle

\begin{abstract}
We introduce a ray class invariant $X(\frC)$ for a totally real field, 
following Shintani's work in the real quadratic case. 
We prove a factorization formula $X(\frC)=X_1(\frC)\cdots X_n(\frC)$ 
where each $X_i(\frC)$ corresponds to a real place 
(Theorem \ref{thm:factorization}). 
Although this factorization depends a priori on some choices 
(especially on a cone decomposition), 
we can show that it is actually independent of these choices 
(Theorem \ref{thm:independence}). 
Finally, we describe the behavior of $X_i(\frC)$ 
when the signature of $\frC$ at a real place is changed 
(Theorem \ref{thm:relation}). 
This last result is also interpreted into an interesting behavior 
of the derivative $L'(0,\chi)$ of $L$-functions. 
\end{abstract}

\tableofcontents

\section{Introduction}
Let $F$ be a totally real algebraic number field of degree $n$ and 
$\Cl_F(\frf)$ the narrow ray class group of $F$ 
modulo an integral ideal $\frf$. 
For a technical reason, we assume that $\frf\subsetneq O_F$. 

For a Dirichlet character $\chi\colon\Cl_F(\frf)\to\C^\times$, 
we consider the $L$-function 
\[L(s,\chi)=\sum_{\fra\subset O_F}\chi(\fra)N(\fra)^{-s}
=\sum_{\frC\in\Cl_F(\frf)}\zeta(s,\frC). \]
Here $\zeta(s,\frC)$ denotes the partial zeta function 
associated with a ray class $\frC$. 
Then the leading coefficient 
in the Taylor expansion of $L(s,\chi)$ at $s=0$, 
denoted by $L^*(0,\chi)$, is an important invariant of $F$ and $\chi$, 
especially in the light of the Stark conjecture 
(see e.g.\ \cite{Tate84}). 

In the present paper, we restrict our discussion to the case of order $1$, 
that is, $L^*(0,\chi)=L'(0,\chi)$. 
This condition can be rephrased in terms of the infinite part, 
or the `signature' of $\chi$, as follows. 
Let us number the real places of $F$ and 
denote by $x\mapsto x^{(i)}$ ($i=1,\ldots,n$) 
the corresponding embeddings of $F$ into $\R$. 
Then we choose an element $\mu_i\in F$ for each $i$ such that 
\begin{equation}\label{eq:mu_i}
\mu_i\in 1+\frf,\quad \mu_i^{(i)}<0,\quad \mu_i^{(j)}>0\ (j\ne i) 
\end{equation}
and call the $n$-tuple 
$\bigl(\chi(\mu_1),\ldots,\chi(\mu_n)\bigr)$ of $\pm 1$ 
the \emph{signature} of $\chi$ 
(it is well-defined since the ray class of the principal ideal $(\mu_i)$ is 
independent of the choice of $\mu_i$). 
If $\chi$ is primitive and nontrivial, 
the functional equation for $L(s,\chi)$ tells us that 
the order of vanishing at $s=0$ is equal to the number of $+1$ 
in the signature of $\chi$. 
In particular, in the case of order $1$, 
there is a unique real place for which $\chi(\mu_i)=+1$. 
Hence it is natural to expect that the value $L'(0,\chi)$ may be 
expressed by the contribution of that real place, in some sense. 

To be more precise, we consider the partial zeta functions 
instead of $L$-functions. 
For each ray class $\frC\in\Cl_F(\frf)$, 
we define the \emph{Shintani invariant} $X(\frC)\in\R$ by the formula 
\begin{equation}\label{eq:X(C)}
X(\frC):=
\exp\Bigl(-\zeta'(0,\frC)+(-1)^n\zeta'\bigl(0,\mu\frC\bigr)\Bigr), 
\end{equation}
where $\mu$ is an element of $1+\frf$ which is totally negative 
(for instance, take $\mu=\mu_1\cdots\mu_n$). 
Note that $L'(0,\chi)$ is a linear combination of $\log X(\frC)$: 
\[L'(0,\chi)=-\frac{1}{2}\sum_{\frC\in\Cl_F(\frf)}\chi(\frC)\log X(\frC), \]
since $\chi(\mu)=\chi(\mu_1)\cdots\chi(\mu_n)=(-1)^{n-1}$ in our case. 

Now we explain the main results. 
Let $\Phi$ be a rational cone decomposition of 
the totally positive part of $\R^n=F\otimes_\Q\R$ modulo units 
(the precise definitions of this and the following notations 
will be given in Section 3). 
Choose an integral representative $\fra_0$ of a ray class $\frC$, 
and a set of generators $\gen\sigma=\{\omega_1,\ldots,\omega_d\}$ 
for each cone $\sigma\in\Phi$ from $\fra_0^{-1}\frf$. 
We also denote by $P_\sigma$ the parallelotope spanned by $\gen\sigma$. 
Then we put 
\begin{equation}\label{eq:X_i}
X_i(\frC):=\prod_{\sigma\in\Phi}\prod_{z_\sigma\in P_\sigma\cap(z+\frf)}
\Sine\bigl(z_\sigma^{(i)},(\gen\sigma)^{(i)}\bigr)
\end{equation}
where $\Sine$ is the multiple sine function 
(which will be reviewed in Section 2). 
Our first result (Theorem \ref{thm:factorization}) 
is the following \emph{factorization formula}: 
\begin{equation}\label{eq:Main1}
X(\frC)=\prod_{i=1}^nX_i(\frC). 
\end{equation}

The value $X_i(\frC)$ may be regarded as the contribution of 
the $i$-th real place, but its definition given above 
depends on some auxiliary choices, 
especially on the cone decomposition $\Phi$. 
Our second result, Theorem \ref{thm:independence}, 
states that $X_i(\frC)$ is actually independent of those choices. 

The last main result (Theorem \ref{thm:relation}) is the formula
\begin{equation}\label{eq:Main3}
X_i(\mu_j\frC)=\begin{cases}
X_i(\frC)&(i=j), \\ X_i(\frC)^{-1}&(i\ne j). \end{cases}
\end{equation}
This formula ensures the expected principle that, 
in the value $L'(0,\chi)$, 
only the contribution appears of the unique real place at which $\chi$ has 
positive signature. Indeed, if $i\in\{1,\ldots,n\}$ 
is the unique index such that $\chi(\mu_i)=+1$, we have 
\begin{align*}
&\sum_{\nu_1,\ldots,\nu_n=0}^1
\chi(\mu_1^{\nu_1}\cdots\mu_n^{\nu_n}\frC)
\log X(\mu_1^{\nu_1}\cdots\mu_n^{\nu_n}\frC)\\
&=\chi(\frC)\sum_{\nu_1,\ldots,\nu_n=0}^1(-1)^{\nu_i}
\sum_{j=1}^n\log X_j(\mu_1^{\nu_1}\cdots\mu_n^{\nu_n}\frC)\\
&=\chi(\frC)\sum_{j=1}^n
\sum_{\nu_1,\ldots,\nu_n=0}^1(-1)^{\nu_i}(-1)^{\nu_j}\log X_j(\frC)\\
&=2^n\chi(\frC)\log X_i(\frC), 
\end{align*}
which leads to 
\[L'(0,\chi)=-\frac{1}{2}\sum_{\frC\in\Cl_F(\frf)}\chi(\frC)\log X_i(\frC). \]

\bigskip 
So far, the Shintani invariant has been studied by several authors, 
mostly in the quadratic case $n=2$. 
For example, in that case, the formula \eqref{eq:Main1} was 
proved by Shintani \cite{Shintani77}, and the formula \eqref{eq:Main3} 
was essentially obtained by Arakawa \cite{Arakawa82}. 
Tangedal \cite{Tangedal07} and the author \cite{Yamamoto08} 
also treated the case of $n=2$ by using the theory of continued fractions. 
We remark that all of them considered only some specific cone decompositions. 
In fact, one of the difficulties in treating higher degree is 
involved in controlling the configuration of 
general cone decompositions in $\R^n$. 
In this paper, we overcome it by certain combinatorial discussions, 
especially the notion of `upper and lower closures' introduced in Section 4. 
We also note that the proof given in the present paper is 
not only applicable to higher degree, 
but also much simpler than the previous ones even in the quadratic case 
(see \ref{subsec:n=2}). 

Yoshida \cite{Yoshida03} closely investigated 
the derivatives $\zeta'(0,\frC)$ themselves, 
instead of the combinations $-\zeta'(0,\frC)+(-1)^n\zeta'(0,\frC)$, 
mainly from the viewpoint of the absolute CM periods. 
In particular, he obtained in the case of $n=2$ a result 
\cite[Chapter III, Proposition 6.2]{Yoshida03} 
corresponding to our formula \eqref{eq:Main3}. 
It may be interesting to apply the method of the present paper 
in Yoshida's framework. 

\bigskip 
\emph{Acknowledgement}. 
The author would like to express his gratitude to 
Prof.~T.~Tsuji for valuable discussions and suggestions, 
especially on the product expression \eqref{eq:X_i} of $X_i(\frC)$. 

\subsection{Notation}
The coordinates of a vector $x\in\R^m$ will be denoted by 
$x^{(1)},\ldots,x^{(m)}$. 
We define the \emph{norm} $N\colon\R^m\to\R$ by $N(x)=\prod_{i=1}^mx^{(i)}$. 
For any subset $A$ of $\R^m$, 
we denote by $A_+$ the set of all totally positive elements of $A$, 
i.e., $x\in A$ such that $x^{(i)}>0$ for $i=1,\ldots,m$. 
We also regard $\R^m$ as an $\R$-algebra, 
so that the multiplications are taken componentwise. 

Let $F$ be a totally real algebraic number field of degree $n$. 
For an integral ideal $\frf$ of $F$, 
we denote by $\Cl_F(\frf)$ the narrow ray class group modulo $\frf$, 
and by $E_\frf$ the group of totally positive units 
congruent to $1$ modulo $\frf$.  

We number the real places of $F$ and embed $F$ into $\R^n$. 
It is equivalent to fixing an isomorphism of $\R$-algebras 
$F\otimes_\Q\R\cong\R^n$. 
In particular, for $x\in F$, $N(x)$ is the norm 
with respect to the extension $F/\Q$. 
We also use the notation $N(\fra)$ for the absolute norm of 
a fractional ideal $\fra$ of $F$. 

\smallskip

For $x\in\R$, we define $\langle x\rangle$, 
the \emph{fractional part} of $x$, 
to be the number $t$ such that $x-t\in\Z$ and $0<t\leq 1$. 

\smallskip

In the present paper, a \emph{cone} in $\R^n$ means 
an open simplicial cone, i.e., a subset $\sigma$ of $\R^n$ of the form 
\[\sigma=\{x_1\omega_1+\cdots+x_d\omega_d\mid x_1,\ldots,x_d>0\}, \]
where $\omega_1,\ldots,\omega_d\in\R^n$ are linearly independent. 
The number $d$ of the independent generators is called 
the \emph{dimension} of $\sigma$ and denoted by $d(\sigma)$. 
We regard $\{0\}$ as the only $0$-dimensional cone. 
A cone $\tau$ is called a \emph{face} of $\sigma$ and 
written $\tau\prec\sigma$ 
if it is generated by a subset of $\{\omega_1,\ldots,\omega_d\}$. 

A cone $\sigma$ is called \emph{rational} 
if generators $\omega_1,\ldots,\omega_d$ can be chosen from $F$. 
If $\sigma$ is a rational cone, 
we will always take its generators from $F$. 

\section{Multiple zeta and sine functions}
Here we review definitions and some results about certain zeta functions 
and multiple sine functions. 

In this section, $m$ and $r$ denote natural numbers and 
$\underline{\omega}=(\omega_1,\ldots,\omega_d)$ is 
an $d$-tuple of vectors in $\R^m_+$ (not necessarily linearly independent). 
We consider one more vector $z\in\R^m_+$ of the form 
$z=x_1\omega_1+\cdots+x_d\omega_d$ where $x_1,\ldots,x_d\geq 0$. 

\subsection{Shintani's multiple zeta functions}
\emph{Shintani's multiple zeta function} is defined by 
\begin{equation}\label{eq:zeta(s,z,omega)}
\zeta_{m,d}(s,z,\underline{\omega}):=\sum_{k_1,\ldots,k_d=0}^\infty 
N(z+k_1\omega_1+\cdots+k_d\omega_d)^{-s}. 
\end{equation}
It converges absolutely for $\re(s)>d/m$. 
When $m=1$, $\zeta_{1,d}$ is just the \emph{$d$-ple zeta function of Barnes} 
and denoted by $\zeta_d$. 
Note that, for general $m$ and each $i=1,\ldots,m$, 
we also consider the projection 
$\underline{\omega}^{(i)}=(\omega_1^{(i)},\ldots,\omega_d^{(i)})$ 
and $z^{(i)}$ of the given data, and the associated zeta function 
\[\zeta_d(s,z^{(i)},\underline{\omega}^{(i)})
=\sum_{m_1,\ldots,m_d=0}^\infty 
\bigl(z^{(i)}+m_1\omega_1^{(i)}+\cdots+m_d\omega_d^{(i)}\bigr)^{-s}. \]

It is obvious from the definition that 
the function $\zeta_{m,d}$ satisfies the distribution relation: 
\begin{equation}\label{eq:DistRel}
\begin{split}
&\zeta_{m,d}\bigl(s,z,(\omega_1,\ldots,N\omega_j,\ldots,\omega_d)\bigr)\\
&=\sum_{a=0}^{N-1}\zeta_{m,d}
\bigl(s,z+a\omega_j,(\omega_1,\ldots,\omega_j,\ldots,\omega_d)\bigr)
\end{split}
\end{equation}
for any positive integer $N$. 

These multiple zeta functions are known to be meromorphically continued 
to the whole $s$-plane and holomorphic at $s=0$. 
Moreover, we have the following formulas 
(\cite[Corollary to Proposition 1]{Shintani76} and 
\cite[Proposition 1]{Shintani77b}): 

\begin{prop}\label{prop:s=0}
We have 
\begin{gather}
\zeta_{m,d}(0,z,\underline{\omega})
=\frac{(-1)^d}{m}\sum_{i=1}^m
\sum_{\underline{l}}\prod_{k=1}^d\omega_k^{l_k-1}\frac{B_{l_k}(x_k)}{l_k!}, \\
\zeta_{m,d}'(0,z,\underline{\omega})=
\sum_{i=1}^m\zeta_d'(0,z^{(i)},\underline{\omega}^{(i)})
+\frac{(-1)^d}{m}\sum_{\underline{l}}C_{\underline{l}}(\underline{\omega})
\prod_{k=1}^d\frac{B_{l_k}(x_k)}{l_k!}. 
\end{gather}
Here, $\underline{l}=(l_1,\ldots,l_d)$ runs through $d$-tuples of non-negative 
integers such that $l_1+\cdots+l_d=d$, 
$B_l(x)$ denotes the $l$-th Bernoulli polynomial, and 
\[C_{\underline{l}}(\underline{\omega})=\sum_{i,j\in\{1,\ldots,n\},i\ne j}
\int_0^1\Biggl\{\prod_{k=1}^d(\omega_k^{(i)}+\omega_k^{(j)}u)^{l_k-1}
-\prod_{k=1}^d(\omega_k^{(i)})^{l_k-1}\Biggr\}\frac{du}{u}.\]
\end{prop}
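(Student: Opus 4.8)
The plan is to reduce both formulas to the one-variable Barnes functions by comparing $\zeta_{m,d}(s,z,\underline{\omega})$ with the rescaled average $\frac{1}{m}\sum_{i=1}^m\zeta_d(ms,z^{(i)},\underline{\omega}^{(i)})$. The starting point is the Mellin representation, valid for $\re(s)>d/m$: inserting $N(v)^{-s}=\Gamma(s)^{-m}\int_{(0,\infty)^m}\prod_i t_i^{s-1}e^{-v^{(i)}t_i}\,d\underline{t}$ with $v=z+k_1\omega_1+\cdots+k_d\omega_d$ and summing the geometric series over $k_1,\ldots,k_d\geq 0$ (legitimate since $\omega_k\in\R^m_+$) gives
\[
\zeta_{m,d}(s,z,\underline{\omega})
=\frac{1}{\Gamma(s)^m}\int_{(0,\infty)^m}\Bigl(\prod_{i=1}^m t_i^{s-1}\Bigr)
\frac{e^{-\langle z,t\rangle}}{\prod_{k=1}^d\bigl(1-e^{-\langle\omega_k,t\rangle}\bigr)}\,d\underline{t},
\]
where $\langle a,t\rangle=\sum_i a^{(i)}t_i$. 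The decisive feature is that the linear forms $\langle\omega_k,t\rangle$ couple the variables $t_1,\ldots,t_m$, so this is a genuinely $m$-dimensional integral and not a product of one-dimensional ones; the Barnes function $\zeta_d(w,z^{(i)},\underline{\omega}^{(i)})$ is the analogous integral in a single variable with $w$ in place of $s$.

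For the value at $s=0$ I would consider $g(s):=\zeta_{m,d}(s,z,\underline{\omega})-\frac{1}{m}\sum_{i=1}^m\zeta_d(ms,z^{(i)},\underline{\omega}^{(i)})$, which is holomorphic at $s=0$ because both terms are (as recalled above). The motivation for this particular comparison is the pointwise identity $N(v)^{-s}-\frac{1}{m}\sum_i(v^{(i)})^{-ms}=O(s^2)$, obtained by expanding $e^{-s\log N(v)}$ and $e^{-ms\log v^{(i)}}$ and observing that the constant and linear terms in $s$ cancel. This makes it natural to expect $g(0)=0$, which is exactly the assertion
\[
\zeta_{m,d}(0,z,\underline{\omega})=\frac{1}{m}\sum_{i=1}^m\zeta_d(0,z^{(i)},\underline{\omega}^{(i)}).
\]
Combined with the classical one-variable evaluation $\zeta_d(0,z^{(i)},\underline{\omega}^{(i)})=(-1)^d\sum_{\underline{l}}\prod_k(\omega_k^{(i)})^{l_k-1}\frac{B_{l_k}(x_k)}{l_k!}$ — itself read off from the Laurent expansion at $\tau=0$ of $e^{-z^{(i)}\tau}\prod_k(1-e^{-\omega_k^{(i)}\tau})^{-1}$ via the Bernoulli generating function — this yields the first formula.

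For the derivative, the chain rule $\frac{d}{ds}\zeta_d(ms,z^{(i)},\underline{\omega}^{(i)})\big|_{s=0}=m\,\zeta_d'(0,z^{(i)},\underline{\omega}^{(i)})$ produces the clean splitting
\[
\zeta_{m,d}'(0,z,\underline{\omega})=\sum_{i=1}^m\zeta_d'(0,z^{(i)},\underline{\omega}^{(i)})+g'(0),
\]
so the whole content of the correction term is the evaluation of $g'(0)$, and this is where I expect the real work to lie. Although each summand of $g$ is $O(s^2)$, the defining series converges only for $\re(s)>d/m$, and its continuation to $s=0$ acquires a finite derivative precisely from the coupling $\langle\omega_k,t\rangle=\sum_i\omega_k^{(i)}t_i$ that separates the $m$-dimensional integrand from the decoupled one-dimensional ones. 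To extract $g'(0)$ I would expand the difference of the two integral representations, insert the Bernoulli expansions of the factors, and in each off-diagonal pair of coordinates change variables by $u=t_j/t_i$; for every ordered pair $i\neq j$ and multi-index $\underline{l}$ with $l_1+\cdots+l_d=d$ this should produce the regularized inner integral $\int_0^1\bigl\{\prod_k(\omega_k^{(i)}+\omega_k^{(j)}u)^{l_k-1}-\prod_k(\omega_k^{(i)})^{l_k-1}\bigr\}\frac{du}{u}$, in which the subtracted term and the measure $du/u$ are exactly the regularization forced by the holomorphy of $g$ at $0$. Assembling these contributions over all pairs and multi-indices gives $g'(0)=\frac{(-1)^d}{m}\sum_{\underline{l}}C_{\underline{l}}(\underline{\omega})\prod_k\frac{B_{l_k}(x_k)}{l_k!}$, and hence the second formula. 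The main obstacle is the careful bookkeeping in this last step: justifying the termwise continuation, tracking the logarithmic factors arising from differentiating $\Gamma(s)^{-m}$ and the powers $t_i^{s-1}$, and checking that precisely the stated regularized integral, with no boundary term left over, survives in the limit.
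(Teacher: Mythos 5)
The paper does not actually prove this proposition: it is quoted directly from Shintani (\cite[Corollary to Proposition 1]{Shintani76} and \cite[Proposition 1]{Shintani77b}), so your attempt must be measured against Shintani's original argument. Your framework is the right one, and is indeed the one underlying Shintani's proof: the $m$-fold Mellin representation, the comparison function $g(s)=\zeta_{m,d}(s,z,\underline{\omega})-\frac{1}{m}\sum_{i=1}^m\zeta_d(ms,z^{(i)},\underline{\omega}^{(i)})$ whose value and derivative at $s=0$ encode the two formulas, and the substitution $u=t_j/t_i$ that produces the integrals $C_{\underline{l}}(\underline{\omega})$.

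However, there is a genuine gap: neither $g(0)=0$ nor the evaluation of $g'(0)$ is actually proved, and these two claims \emph{are} the proposition. For $g(0)=0$ your only argument is the termwise estimate $N(v)^{-s}-\frac{1}{m}\sum_i(v^{(i)})^{-ms}=O(s^2)$, followed by ``natural to expect.'' A termwise estimate on a series converging only for $\re(s)>d/m$ controls nothing about the analytic continuation at $s=0$; worse, this particular heuristic is demonstrably misleading, because applied uniformly it would predict $g'(0)=0$ as well, contradicting the second formula whenever $C_{\underline{l}}(\underline{\omega})\neq 0$. So the very phenomenon the proposition quantifies (a nonzero correction term surviving continuation even though every term vanishes to second order) shows that the heuristic cannot be upgraded to a proof. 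All of the content therefore sits in the step you defer to ``bookkeeping'': splitting the $m$-dimensional Mellin integral into the sectors $\{t_j\leq t_i\ (j\neq i)\}$, substituting $t_j=u_jt_i$, integrating out $t_i$ against $\Gamma(ms)$ to obtain the continuation as a finite sum of explicit integrals in the $u_j$, and reading off the Laurent data at $s=0$, where the Bernoulli expansions and the regularized $du/u$ integrals combine into exactly $\frac{(-1)^d}{m}\sum_{\underline{l}}C_{\underline{l}}(\underline{\omega})\prod_k\frac{B_{l_k}(x_k)}{l_k!}$. Until that computation is carried out, what you have is the correct plan --- essentially Shintani's --- but not a proof.
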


\subsection{The multiple sine functions}
Now we put $\abs{\underline{\omega}}=\omega_1+\cdots+\omega_d$ 
and look at the linear combination 
\begin{equation}\label{eq:xi}
\xi_{m,d}(s,z,\underline{\omega})=-\zeta_{m,d}(s,z,\underline{\omega})
+(-1)^d\zeta_{m,d}(s,\abs{\underline{\omega}}-z,\underline{\omega}). 
\end{equation}
Here we assume that the coefficients $x_1,\ldots,x_d$ of $z$ belong 
to the interval $[0,1]$ and $z\ne 0,\,\abs{\underline{\omega}}$. 

Let us define the function $\Sine_{m,d}(z,\underline{\omega})$ by 
\[\Sine_{m,d}(z,\underline{\omega})
:=\exp\biggl(\frac{\partial}{\partial s}
\xi_{m,d}(s,z,\underline{\omega})\Bigm|_{s=0}\biggr). \]
Again we can apply this definition 
to each projection $\underline{\omega}^{(i)}$ and $z^{(i)}$, 
and then obtain the \emph{multiple sine function} 
$\Sine_d(\underline{\omega}^{(i)},z^{(i)})$ introduced by Kurokawa 
(see \cite{Kurokawa-Koyama03}). 

\begin{prop}
We have the homogeneity 
\begin{equation}\label{eq:Sine_homogeneity}
\Sine_{m,d}(\lambda z,\lambda\underline{\omega})
=\Sine_{m,d}(z,\underline{\omega})
\end{equation}
for any $\lambda\in\R^m_+$, and the factorization formula 
\begin{equation}\label{eq:Sine_factor}
\Sine_{m,d}(z,\underline{\omega})
=\prod_{i=1}^n \Sine_d(z^{(i)},\underline{\omega}^{(i)}). 
\end{equation}
\end{prop}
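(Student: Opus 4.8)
The plan is to derive both identities from the explicit formulas in Proposition~\ref{prop:s=0}, the whole argument being driven by the reflection symmetry $B_l(1-x)=(-1)^lB_l(x)$ of the Bernoulli polynomials together with the constraint $l_1+\cdots+l_d=d$ that appears in those formulas. The common preliminary observation is that for the reflected point $\abs{\underline{\omega}}-z=\sum_k(1-x_k)\omega_k$ the coordinates are $1-x_k$, so passing from $z$ to $\abs{\underline{\omega}}-z$ replaces each $B_{l_k}(x_k)$ by $(-1)^{l_k}B_{l_k}(x_k)$; since $\sum_kl_k=d$, the accumulated sign is exactly $(-1)^d$. I would also note that the standing hypotheses $x_k\in[0,1]$ and $z\ne 0,\abs{\underline{\omega}}$ guarantee that $\abs{\underline{\omega}}-z$ again satisfies them, so Proposition~\ref{prop:s=0} applies to both arguments.

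For the homogeneity, I would first record the multiplicativity of the norm: since the algebra structure on $\R^m$ is componentwise, $N(\lambda w)=N(\lambda)N(w)$ for $\lambda\in\R^m_+$, and $\abs{\lambda\underline{\omega}}=\lambda\abs{\underline{\omega}}$. Substituting into the defining series gives $\zeta_{m,d}(s,\lambda z,\lambda\underline{\omega})=N(\lambda)^{-s}\zeta_{m,d}(s,z,\underline{\omega})$ on the domain of convergence, hence everywhere by analytic continuation, and therefore $\xi_{m,d}(s,\lambda z,\lambda\underline{\omega})=N(\lambda)^{-s}\xi_{m,d}(s,z,\underline{\omega})$. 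Differentiating in $s$ at $s=0$ reproduces $\xi_{m,d}'(0,z,\underline{\omega})$ together with the extra term $-\log N(\lambda)\cdot\xi_{m,d}(0,z,\underline{\omega})$, so it remains only to check that $\xi_{m,d}(0,z,\underline{\omega})=0$. This is where the preliminary observation enters: by the first formula of Proposition~\ref{prop:s=0} one gets $\zeta_{m,d}(0,\abs{\underline{\omega}}-z,\underline{\omega})=(-1)^d\zeta_{m,d}(0,z,\underline{\omega})$, and inserting this into the definition of $\xi_{m,d}$ makes the two terms cancel.

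For the factorization, I would unwind $\log\Sine_{m,d}(z,\underline{\omega})=\xi_{m,d}'(0,z,\underline{\omega})=-\zeta_{m,d}'(0,z,\underline{\omega})+(-1)^d\zeta_{m,d}'(0,\abs{\underline{\omega}}-z,\underline{\omega})$ and substitute the second formula of Proposition~\ref{prop:s=0} into each of the two derivatives. The leading sums $\sum_i\zeta_d'(0,\cdot,\underline{\omega}^{(i)})$ reassemble, term by term in $i$, into $-\zeta_d'(0,z^{(i)},\underline{\omega}^{(i)})+(-1)^d\zeta_d'(0,\abs{\underline{\omega}^{(i)}}-z^{(i)},\underline{\omega}^{(i)})=\xi_d'(0,z^{(i)},\underline{\omega}^{(i)})=\log\Sine_d(z^{(i)},\underline{\omega}^{(i)})$. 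The remaining $C_{\underline{l}}(\underline{\omega})$ contributions must disappear, and they do for the same reason as before: the reflection turns the $C_{\underline{l}}$ term of the second derivative into $(-1)^d$ times that of the first, so after the front signs $-1$ and $(-1)^d$ of the two derivatives the two $C_{\underline{l}}$ contributions carry opposite signs and cancel. Exponentiating then yields the product formula.

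The computations are routine once this symmetry is in place; the only delicate point is the sign bookkeeping in the last paragraph, where three separate factors of $(-1)^d$ — one from the definition of $\xi_{m,d}$, one from the reflection of the $d$ Bernoulli polynomials, and one from the prefactor $\frac{(-1)^d}{m}$ in Proposition~\ref{prop:s=0} — must be tracked simultaneously to confirm that the $C_{\underline{l}}$ terms really cancel rather than reinforce. I expect no genuine obstacle beyond this bookkeeping.
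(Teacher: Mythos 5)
Your proof is correct and follows essentially the same route as the paper: both derive $\xi_{m,d}(0,z,\underline{\omega})=0$ and $\xi_{m,d}'(0,z,\underline{\omega})=\sum_i\xi_d'(0,z^{(i)},\underline{\omega}^{(i)})$ from Proposition \ref{prop:s=0} together with the reflection $B_l(1-x)=(-1)^lB_l(x)$, and then obtain homogeneity from the identity $\xi_{m,d}(s,\lambda z,\lambda\underline{\omega})=N(\lambda)^{-s}\xi_{m,d}(s,z,\underline{\omega})$. You have merely spelled out the sign bookkeeping that the paper leaves implicit.
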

\begin{proof}
First note that Proposition \ref{prop:s=0} and the property 
\[B_l(1-x)=(-1)^lB_l(x)\]
of the Bernoulli polynomials lead the formulas 
\begin{gather}
\xi_{m,d}(0,z,\underline{\omega})=0, \label{eq:xi(0)}\\
\xi_{m,d}'(0,z,\underline{\omega})
=\sum_{i=1}^n\xi_d'(0,z^{(i)},\underline{\omega}^{(i)}). \label{eq:xi'(0)}
\end{gather}
Hence \eqref{eq:Sine_factor} holds, 
and \eqref{eq:Sine_homogeneity} follows from \eqref{eq:xi(0)} and 
the identity 
\[\xi_{m,d}(s,\lambda z,\lambda\underline{\omega})=
N(\lambda)^{-s}\xi_{m,d}(s,z,\underline{\omega}). \]
\end{proof}

\begin{rem}\upshape 
The function $\Sine_{m,d}(z,\underline{\omega})$, 
considered as a function of $(x_1,\ldots,x_d)$, 
can be meromorphically continued to $\C^d$: 
Indeed, by \eqref{eq:Sine_factor}, it is reduced to 
the meromorphic continuation of each factor 
$\Sine_d(z^{(i)},\underline{\omega}^{(i)})$ as a function of $z^{(i)}$, 
and the latter follows from the continuation of 
the multiple gamma function of Barnes (\cite{Barnes04}). 
\end{rem}

\section{Factorization of the Shintani invariant}
From now on, we fix an integral ideal $\frf\subsetneq O_F$ of $F$ and 
a narrow ray class $\frC\in\Cl_F(\frf)$. 

In this section, we study an expression of the Shintani invariant $X(\frC)$ 
in terms of a certain cone decomposition. 
The main result is Theorem \ref{thm:factorization}. 

\subsection{The Shintani invariant}
Let us choose an integral ideal $\fra_0$ from the class $\frC$. 
Then the partial zeta function associated with $\frC$ can be written as 
\[\zeta(s,\frC)
=\sum_{\alpha\in(1+\fra_0^{-1}\frf)_+/E_\frf}
N\bigl((\alpha)\fra_0\bigr)^{-s}. \]
Moreover, if we also take $z\in F_+$ and put $\frb=z\fra_0^{-1}\frf$, 
then we have 
\begin{equation}\label{eq:zeta(s,C)}
\zeta(s,\frC)=N(\frb^{-1}\frf)^{-s}\,\zeta_\frf(s,z+\frb), 
\end{equation}
where 
\begin{equation}\label{eq:zeta(z+b)}
\zeta_\frf(s,z+\frb):=\sum_{\beta\in(z+\frb)_+/E_\frf}N(\beta)^{-s}. 
\end{equation}
By the assumption $\frf\subsetneq O_F$, we have $z\notin\frb$; 
we will use this fact later. 

Recall that the Shintani invariant $X(\frC)$ is defined by 
\[\log X(\frC)=-\zeta'(0,\frC)+(-1)^n\zeta'(0,\mu\frC), \]
where $\mu$ is a totally negative element of $1+\frf$. 
For the class $\mu\frC$, we may use the data $\fra'_0=\mu\fra_0$ 
and $z'=-\mu z\in F_+$ in places of $\fra_0$ and $z$ for $\frC$. 
Then $z'(\fra'_0)^{-1}\frf=\frb$ and $z'+\frb=-z+\frb$, 
hence we have 
\begin{equation}\label{eq:zeta(s,muC)}
\zeta(s,\mu\frC)=N(\frb^{-1}\frf)^{-s}\,\zeta_\frf(s,-z+\frb). 
\end{equation}
Therefore, we have to study the function 
\begin{equation}\label{eq:xi(z+b)}
\xi_\frf(s,z+\frb):=-\zeta_\frf(s,z+\frb)+(-1)^n\zeta_\frf(s,-z+\frb). 
\end{equation}

Following Shintani \cite{Shintani76,Shintani77b}, we will analyze 
the function $\xi_\frf(s,z+\frb)$ 
by relating it to the multiple zeta functions considered in Section 2. 

In the following, we often omit the subscripts from the notation $\zeta_{m,d}$ 
and simply write as $\zeta(s,z,\underline{\omega})$. 

\subsection{Shintani's cone decomposition}
Shintani investigated the zeta function \eqref{eq:zeta(z+b)}
by using certain cone decomposition of $\R^n_+$. 
Namely, he proved the following result (\cite[Proposition 4]{Shintani76}): 

\begin{thm}
There exists a finite collection $\Phi$ of rational cones in $\R^n_+$ 
such that 
\[\R^n_+=\coprod_{\eps\in E_\frf}\coprod_{\sigma\in\Phi}\eps\sigma. \]
\end{thm}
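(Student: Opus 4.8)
The plan is to exhibit a fundamental domain for the action of $E_\frf$ on $\R^n_+$ which is a finite union of rational open simplicial cones. First I would pass to logarithmic coordinates via the group isomorphism $\ell\colon\R^n_+\to\R^n$, $\ell(x)=(\log x^{(1)},\ldots,\log x^{(n)})$, which carries the componentwise action of $E_\frf$ to translation by the subgroup $L=\ell(E_\frf)$. Since every $\eps\in E_\frf$ satisfies $N(\eps)=1$, the set $L$ lies in the trace-zero hyperplane $H_0=\{y\in\R^n\mid\sum_i y^{(i)}=0\}$, and by Dirichlet's unit theorem $E_\frf$ is free of rank $n-1$, so $L$ is a full lattice in $H_0$. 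Decomposing $\R^n=H_0\oplus\R(1,\ldots,1)$, the complementary line $\R(1,\ldots,1)$ corresponds under $\ell^{-1}$ to scaling by $\R_{>0}$, which commutes with $E_\frf$ and preserves cones. Hence a fundamental domain may be taken invariant under positive scaling, i.e.\ a cone over a fundamental domain for $E_\frf$ on the norm-one surface $S=\{x\in\R^n_+\mid N(x)=1\}$; and because $L$ has full rank, $S/E_\frf\cong H_0/L$ is a compact $(n-1)$-torus.

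Next I would produce a fundamental domain that is a finite union of (not yet rational) simplicial cones. I would triangulate the compact quotient $S/E_\frf$ and lift the triangulation to an $E_\frf$-invariant triangulation of $S$ with finitely many vertex orbits, all vertices lying on $S\subset\R^n_+$ and hence totally positive. Taking the straight simplicial cone spanned by the vertices of each simplex, I get finitely many simplicial cones in $\R^n_+$; since $\eps\in E_\frf$ acts $\R$-linearly and preserves $N$, it permutes these cones and their vertices on $S$, so their $E_\frf$-translates cover $\R^n_+$, and for a sufficiently fine triangulation the straight cones project radially onto the lifted simplices and overlap only along common faces. To convert this almost-disjoint cover into the genuine disjoint union demanded by the statement, I would pass to open faces and then select, from each $E_\frf$-orbit of open faces occurring on the boundaries, exactly one representative, discarding the rest. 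The resulting finite collection $\Phi$ of open cones of dimensions $1\le d\le n$ then partitions $\R^n_+$ as $\R^n_+=\coprod_{\eps\in E_\frf}\coprod_{\sigma\in\Phi}\eps\sigma$, once rationality is arranged.

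It remains to make all generators rational, i.e.\ to replace them by elements of $F$. Here I would use that $F$ is dense in $\R^n$, as $F\otimes_\Q\R=\R^n$, so each generator can be approximated arbitrarily well by a totally positive element of $F$. The approximation must be carried out consistently and $E_\frf$-equivariantly: I would choose rational generators only for a set of representatives of the generating rays modulo $E_\frf$, assign a single common rational value to a ray shared by adjacent cones, and then propagate by the action of $E_\frf$. Since each $\eps$ lies in $F$ and acts $\R$-linearly, the $\eps$-translate of a rational generator is again rational, so equivariance preserves rationality automatically; and for a small enough perturbation the generators remain totally positive and linearly independent, so the $\sigma$ stay open simplicial cones in $\R^n_+$.

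The main obstacle is to verify that this perturbation still yields an \emph{exact} tiling, with neither gaps (covering) nor overlaps (disjointness modulo $E_\frf$). Both conditions are local and combinatorial: along each shared wall the two adjacent closed cones must continue to fit together without overlap, and around each lower-dimensional face the cyclic arrangement of the surrounding top-dimensional cones must be preserved. I expect these to persist under all sufficiently small perturbations by an openness argument, since the incidence relations of a simplicial fan are stable under small motions of the vertices; but making this uniform over the finitely many faces modulo $E_\frf$ and compatible with the $E_\frf$-identifications is the delicate point, and the part I would expect to require the most care.
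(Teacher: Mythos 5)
The first thing to note is that the paper does not prove this theorem at all: it is quoted verbatim from Shintani (\cite[Proposition 4]{Shintani76}), so there is no in-paper argument to compare with, and your proposal has to stand on its own. Its skeleton (logarithm map, Dirichlet's theorem, compactness of the norm-one surface modulo $E_\frf$, equivariant triangulation, coning, rational perturbation) is a reasonable strategy and can in principle be completed. But the load-bearing step is precisely the one you leave as an expectation: that an \emph{exact} tiling survives (a) the replacement of the curved cells on the norm-one surface $S$ by straight cones over their vertices, and (b) the small $E_\frf$-equivariant perturbation of generators into $F$. Point (a) is not even flagged as a difficulty, yet it is one: the radial projection of a straight simplex does \emph{not} coincide with the curved cell having the same vertices (already for an edge, the radial projection of the segment $[v_0,v_1]$ and the curve $t\mapsto v_0^{1-t}v_1^{t}$ are different arcs on $S$), so ``the straight cones project radially onto the lifted simplices'' is false as stated, and the straight cones must be shown to tile by a separate argument. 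Point (b) you do flag, but ``incidence relations are stable under small motions'' is not enough: fitting along shared walls is indeed an open condition, but local injectivity around faces of codimension $\geq 2$ does not follow from wall-fitting alone. The correct argument in both places is global and degree-theoretic: the perturbed cell complex, taken modulo $E_\frf$, maps to the compact quotient of $S$ by a map which one shows (by induction on the dimension of links) to be a local homeomorphism, hence a covering of a compact connected manifold, whose degree is $1$ by homotopy invariance, hence a homeomorphism. That covering/degree argument, made uniform over the finitely many orbits of faces, is the actual mathematical content of the proof, and the proposal does not supply it; as written there is a genuine gap.

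It is also worth contrasting your route with Shintani's, which sidesteps the perturbation issue entirely by making rationality the starting point rather than the last step: since $F$ is dense in $\R^n$, every point of $\R^n_+$ lies in the interior of some rational simplicial cone; compactness of $S$ modulo $E_\frf$ then yields finitely many rational cones whose $E_\frf$-translates cover $\R^n_+$; finally one disjointifies by polyhedral dissection, using the fact that intersections and differences of rational simplicial cones decompose into finitely many disjoint rational open simplicial cones. In that approach no limiting or stability argument is ever needed, which is why it is the standard one; your approach trades that elementary (if tedious) cut-and-paste for a genuinely topological stability theorem that still has to be proved.
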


Now we fix such $\Phi$, 
and set $\tilde{\Phi}=\{\eps\sigma\mid \eps\in E_\frf,\,\sigma\in\Phi\}$. 
By considering an appropriate subdivision, 
we further assume that any face $\ne\{0\}$ of a cone in $\tilde{\Phi}$ 
also belongs to $\tilde{\Phi}$. 
This amounts to assuming that the closures $\overline{\sigma}$ of cones 
$\sigma\in\tilde{\Phi}$ together with the origin $\{0\}$ form a \emph{fan} 
in the sense of toric geometry. 

We remark that the cone decomposition $\tilde{\Phi}$ of $\R^n_+$ is 
locally finite, that is, any compact subset in $\R^n_+$ 
intersects with only finite number of cones in $\tilde{\Phi}$. 
This follows from the finiteness of $\Phi$ and 
the proper discontinuity of the action of $E_\frf$ on $\R^n_+$. 

Recall that a fractional ideal $\frb$ of $F$ and 
an element $z\in F\setminus\frb$ are given. 
For each $\sigma\in\tilde{\Phi}$, 
we choose a set of generators $\gen\sigma=\{\omega_1,\ldots,\omega_d\}$ 
consisting of elements of $\frb$. 
We will often write simply $\sigma$ instead of $\gen\sigma$, 
e.g., $\abs{\sigma}$ means $\abs{\gen\sigma}=\omega_1+\cdots+\omega_d$, 
and $\zeta(s,z,\sigma)=\zeta_{n,d}(s,z,\gen\sigma)$ denotes 
the multiple zeta function. 

By using a fixed set of generators $\gen\sigma=\{\omega_1,\ldots,\omega_d\}$, 
we put 
\[P_\sigma
=\bigl\{x_1\omega_1+\cdots+x_d\omega_d\bigm|0<x_1,\ldots,x_d\leq 1\bigr\}, \]
the parallelotope spanned by $\gen\sigma$. 
Then the set $\sigma$ is decomposed as 
\[\sigma=\coprod_{k_1,\ldots,k_d=0}^\infty 
(P_\sigma+k_1\omega_1+\cdots+k_d\omega_d), \]
from which we deduce the decomposition of the zeta function 
\begin{equation}\label{eq:zeta(s,z+b)}
\zeta_\frf(s,z+\frb)
=\sum_{\sigma\in\Phi}\sum_{\beta\in\sigma\cap(z+\frb)}N(\beta)^{-s}
=\sum_{\sigma\in\Phi}
\sum_{z_\sigma\in P_\sigma\cap(z+\frb)}
\zeta(s,z_\sigma,\sigma). 
\end{equation}
This reduces the study of $\zeta_\frf(s,z+\frb)$ to those of 
Shintani's multiple zeta functions \emph{and} some combinatorics on the cones. 

\subsection{Decomposition formula for $\xi_\frf(s,z+\frb)$}
The formula \eqref{eq:zeta(s,z+b)} seems to lead to an expression 
of the value $\exp\xi_\frf'(0,z+\frb)$ 
as a product of multiple sine functions 
$\Sine(z_\sigma,\sigma)$, where $z_\sigma\in P_\sigma\cap(z+\frb)$. 
There are, however, two apparent differences: 
The definition of $\Sine(z_\sigma,\sigma)$ includes 
the sign $(-1)^{d(\sigma)}$ instead of $(-1)^n$, 
and also vectors $\abs{\sigma}-z_\sigma$ 
which does not necessarily appear in $P_\sigma\cap(-z+\frb)$ 
(because of the boundary of $P_\sigma$). 

We show that, in a sense, these two gaps cancel each other out: 
\begin{prop}\label{prop:combi}
We have 
\begin{equation}\label{eq:combi}
\xi_\frf(s,z+\frb)
=\sum_{\sigma\in\Phi}\sum_{z_\sigma\in P_\sigma\cap(z+\frb)}
\xi(s,z_\sigma,\sigma). 
\end{equation}
\end{prop}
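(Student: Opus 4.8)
The plan is to prove the identity $\xi_\frf(s,z+\frb)=\sum_{\sigma}\sum_{z_\sigma}\xi(s,z_\sigma,\sigma)$ by unwinding both sides in terms of the multiple zeta functions and reconciling the two discrepancies noted just before the statement: the mismatch between the sign $(-1)^{d(\sigma)}$ and $(-1)^n$, and the mismatch between the lattice points in $P_\sigma\cap(z+\frb)$ and the reflected points $\abs{\sigma}-z_\sigma$. Recalling the definition
\[
\xi_\frf(s,z+\frb)=-\zeta_\frf(s,z+\frb)+(-1)^n\zeta_\frf(s,-z+\frb),
\]
I would first apply the decomposition \eqref{eq:zeta(s,z+b)} to \emph{both} terms. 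The first term expands cleanly as $-\sum_\sigma\sum_{z_\sigma\in P_\sigma\cap(z+\frb)}\zeta(s,z_\sigma,\sigma)$. The whole difficulty is concentrated in the second term $(-1)^n\zeta_\frf(s,-z+\frb)$, where I must match the sum over $P_\sigma\cap(-z+\frb)$ against the reflected data $\abs{\sigma}-z_\sigma$ appearing in each $\xi(s,z_\sigma,\sigma)=-\zeta(s,z_\sigma,\sigma)+(-1)^{d(\sigma)}\zeta(s,\abs{\sigma}-z_\sigma,\sigma)$.

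The key geometric observation I would exploit is a \emph{reflection} of each closed parallelotope about its center $\tfrac12\abs{\sigma}$: the map $w\mapsto\abs{\sigma}-w$ sends the half-open parallelotope $P_\sigma$ (with faces $x_k\in(0,1]$) to the oppositely half-open parallelotope (with $x_k\in[0,1)$). Thus $\{\abs{\sigma}-z_\sigma\mid z_\sigma\in P_\sigma\cap(z+\frb)\}$ is \emph{not} equal to $P_\sigma\cap(-z+\frb)$; the two differ precisely on the boundary faces, i.e.\ on lower-dimensional faces $\tau\prec\sigma$. This is exactly where the fan structure of $\tilde\Phi$ enters: since every face $\tau\ne\{0\}$ of a cone in $\tilde\Phi$ again lies in $\tilde\Phi$, the boundary contributions of one cone are interior contributions of its faces, and I expect them to telescope. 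The plan is to write the second term as a sum over all $\sigma$ of reflected-point contributions, then reorganize the total sum by grouping each point according to the \emph{smallest} face of $\tilde\Phi$ whose relative interior contains it.

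Concretely, I would introduce for each cone the relatively-open parallelotope and track, for a point $\beta\in(-z+\frb)$, which faces $\tau$ it contributes to via the reflection. The sign bookkeeping is the crux: a point lying in the relative interior of a $d'$-dimensional face $\tau$ gets sign $(-1)^{d(\tau)}=(-1)^{d'}$ from $\xi(s,z_\tau,\tau)$, and I must show that summing these signed contributions over all faces $\tau$ containing a fixed point, together with the ambient $(-1)^n$, collapses to give exactly $(-1)^n\zeta_\frf(s,-z+\frb)$. The vanishing of the spurious boundary terms should follow from an inclusion–exclusion / Euler-characteristic identity on the faces of a $d$-simplex (an alternating sum $\sum_\tau(-1)^{d(\tau)}$ over the faces adjacent to a given boundary stratum vanishes), which is why the $(-1)^{d(\sigma)}$ signs in the definition of $\xi$ are precisely what is needed to make the factorization work despite the dimension drops.

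The main obstacle I anticipate is \textbf{not} the analytic side — the meromorphic continuation and the behavior at $s=0$ are already supplied by Proposition \ref{prop:s=0} — but rather the purely combinatorial accounting of boundary lattice points under reflection. I would need to argue carefully that every point of $(z+\frb)_+$ and of $(-z+\frb)_+$ is counted with the correct total multiplicity and sign across the cones and their faces, using the hypothesis $z\notin\frb$ (equivalently $z\ne 0$ in $F\otimes\R$ modulo $\frb$) to guarantee that neither $0$ nor $\abs{\sigma}$ ever coincides with a relevant lattice point, so the excluded cases in the definition \eqref{eq:xi} of $\xi$ cause no trouble. Once this combinatorial cancellation is established face-by-face and assembled via the disjoint fan decomposition of $\R^n_+$, the identity \eqref{eq:combi} follows by collecting terms.
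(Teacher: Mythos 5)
Your overall route coincides with the paper's own strategy: expand both $\zeta_\frf$-terms along the cone decomposition, note that the reflection $z_\sigma\mapsto\abs{\sigma}-z_\sigma$ sends the half-open parallelotope to the oppositely half-open one, and regroup the reflected contributions according to the unique cone of $\tilde{\Phi}$ whose relative interior contains each lattice point. You are also right that the analytic side is supplied by Proposition \ref{prop:s=0} and that $z\notin\frb$ removes the degenerate cases. The gap is in the combinatorial crux, which you have misidentified. After regrouping, a point $\beta\in(-z+\frb)$ lying in the relative interior of a cone $\rho\in\tilde{\Phi}$ receives a contribution $(-1)^{d(\sigma)}N(\beta)^{-s}$ from \emph{every} cone $\sigma\in\tilde{\Phi}$ with $\rho\prec\sigma$, i.e.\ from the whole star of $\rho$ in the fan (these are exactly the $\sigma$ with $\beta\in\overline{\sigma}$, after translating faces of cones of $\Phi$ back into $\Phi$ by units, which is legitimate since $N(\beta)^{-s}$ is $E_\frf$-invariant). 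What must therefore be proved is
\[
\sum_{\sigma\in\tilde{\Phi},\,\rho\prec\sigma}(-1)^{d(\sigma)}=(-1)^n ,
\]
which is the paper's Lemma \ref{lem:combi2}. In particular the ``spurious'' boundary terms do \emph{not} vanish: summed over the star they contribute $(-1)^n-(-1)^{d(\rho)}$ times the interior sum of $\rho$, and it is exactly this nonzero amount that repairs the mismatch between $(-1)^{d(\rho)}$ and $(-1)^n$.

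Your proposed key identity---an alternating sum over the faces of a \emph{single} $d$-simplex adjacent to a given boundary stratum, which vanishes---is a purely local statement about one face lattice, and it cannot deliver the identity above: the star of $\rho$ is assembled from faces of many different cones of $\Phi$ glued along $\rho$, its alternating sum equals $(-1)^n$ rather than $0$, and its evaluation uses the global fact that $\tilde{\Phi}$ decomposes all of $\R^n_+$, so that the images of the star in $\R^n/\R\rho\cong\R^{n-d(\rho)}$, together with a point at infinity, form a cell decomposition of the sphere $S^{n-d(\rho)}$; the identity is then an Euler characteristic computation. Without this global input (or some substitute, such as the upper/lower closure argument of Section 4), your telescoping claim is unproved; with it, your argument essentially becomes the paper's proof. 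Two smaller points you should also fix: the regrouping requires generator sets chosen compatibly across faces, i.e.\ $\gen\tau\subset\gen\sigma$ whenever $\tau\prec\sigma$, which the paper arranges by taking all generators primitive in $\frb$ (permitted by the distribution relation \eqref{eq:DistRel}); and the passage between sums over $\Phi$ and over $\tilde{\Phi}$ needs the explicit unit-translation bookkeeping carried out around \eqref{eq:combi4}.
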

\begin{proof}
By the distribution relation \eqref{eq:DistRel}, 
the right hand side of \eqref{eq:combi} is independent 
of the choice of $\gen\sigma$. 
Hence we may determine $\gen\sigma$ by the condition 
that all elements $\omega\in\gen\sigma$ are primitive in $\frb$, 
i.e.\ $\omega\in\frb$ and 
$\frac{1}{k}\omega\notin\frb$ for any integer $k\geq 2$. 

Now let $\sigma\in\tilde{\Phi}$ and 
$\gen\sigma=\{\omega_1,\ldots,\omega_d\}$. 
We introduce a bijection from $P_\sigma$ onto itself, defined by 
\[z_\sigma=x_1\omega_1+\cdots+x_d\omega_d\longmapsto 
\overline{z_\sigma}:=\langle -x_1\rangle\omega_1+\cdots
+\langle -x_d\rangle\omega_d\]
(recall that $\langle x\rangle\in(0,1]$ denotes 
the fractional part of $x\in\R$). 
It induces a bijection from $P_\sigma\cap(z+\frb)$ onto 
$P_\sigma\cap(-z+\frb)$, hence by \eqref{eq:zeta(s,z+b)} we have 
\begin{equation}\label{eq:combi1}
\xi_\frf(s,z+\frb)
=\sum_{\sigma\in\Phi}\sum_{z_\sigma\in P_\sigma\cap(z+\frb)}
\Bigl\{-\zeta(s,z_\sigma,\sigma)+
(-1)^n\zeta(s,\overline{z_\sigma},\sigma)\Bigr\}. 
\end{equation}
Comparing this with \eqref{eq:combi}, it suffices to show that 
\begin{equation}\label{eq:combi2}
\sum_{\sigma\in\Phi}\sum_{z_\sigma\in P_\sigma\cap(z+\frb)}
(-1)^n\zeta(s,\overline{z_\sigma},\sigma)
=\sum_{\sigma\in\Phi}\sum_{z_\sigma\in P_\sigma\cap(z+\frb)}
(-1)^{d(\sigma)}\zeta(s,\abs{\sigma}-z_\sigma,\sigma). 
\end{equation}

Let us consider the relative interior of $P_\sigma$: 
\[P_\sigma^\circ=
\bigl\{x_1\omega_1+\cdots+x_d\omega_d\bigm|0<x_1,\ldots,x_d<1\bigr\}. \]
Thus $z_\sigma\in P_\sigma$ belongs to $P_\sigma^\circ$ 
if and only if $\overline{z_\sigma}=\abs{\sigma}-z_\sigma$. 
In general, for each $z_\sigma\in P_\sigma$, 
there exists a unique face $\tau\prec\sigma$ and $z_\tau\in P_\tau^\circ$ 
such that 
\[z_\sigma=z_\tau+\abs{\sigma}-\abs{\tau},\qquad 
\overline{z_\sigma}=\abs{\sigma}-z_\tau. \]
Conversely, for $\tau\in\tilde{\Phi}$ and $z_\tau\in P_\tau^\circ$, 
we have $z_\tau+\abs{\sigma}-\abs{\tau}\in P_\sigma$ 
for each $\sigma\in\tilde{\Phi}$ such that $\tau\prec\sigma$. 
Such a pair $(z_\sigma,z_\tau)$ for which $\sigma\in\Phi$ 
can be translated to another pair for which $\tau\in\Phi$ 
by a unique element of $E_\frf$, and vice versa. 
Since each zeta function is invariant under the translation by $E_\frf$, 
we can rewrite each side of \eqref{eq:combi2} as follows: 
\begin{equation}\label{eq:combi4}
\begin{split}
\sum_{\sigma\in\Phi}&\sum_{z_\sigma\in P_\sigma\cap(z+\frb)}
(-1)^n\zeta(s,\overline{z_\sigma},\sigma)\\
&=\sum_{\tau\in\Phi}\sum_{z_\tau\in P_\tau^\circ\cap(z+\frb)}
\sum_{\sigma\in\tilde{\Phi}, \tau\prec\sigma}
(-1)^n\zeta\bigl(s,\abs{\sigma}-z_\tau,\sigma\bigr),
\end{split}
\end{equation}
\begin{equation}\label{eq:combi4'}
\begin{split}
\sum_{\sigma\in\Phi}&\sum_{z_\sigma\in P_\sigma\cap(z+\frb)}
(-1)^{d(\sigma)}\zeta(s,\abs{\sigma}-z_\sigma,\sigma)\\
&=\sum_{\tau\in\Phi}\sum_{z_\tau\in P_\tau^\circ\cap(z+\frb)}
\sum_{\sigma\in\tilde{\Phi}, \tau\prec\sigma}
(-1)^{d(\sigma)}\zeta\bigl(s,\abs{\tau}-z_\tau,\sigma\bigr). 
\end{split}
\end{equation}
Hence it suffices to show the identity 
\begin{equation}\label{eq:combi3}
\sum_{\sigma\in\tilde{\Phi},\tau\prec\sigma}
(-1)^n\zeta\bigl(s,\abs{\sigma}-z_\tau,\sigma\bigr)
=\sum_{\sigma\in\tilde{\Phi}, \tau\prec\sigma}
(-1)^{d(\sigma)}\zeta\bigl(s,\abs{\tau}-z_\tau,\sigma\bigr)
\end{equation}
for each $\tau\in\Phi$ and $z_\tau\in P_\tau^\circ$. 
We need two lemmas: 

\begin{lem}\label{lem:combi1}
If $\tau$ is a face of $\sigma\in\tilde{\Phi}$ and $z_\tau\in P_\tau^\circ$, 
we have 
\[\zeta(s,\abs{\tau}-z_\tau,\sigma)=\sum_{\tau\prec\rho\prec\sigma}
\zeta\bigl(s,\abs{\rho}-z_\tau,\rho\bigr). \]
\end{lem}
\begin{proof}
Put $\gen\tau=\{\omega_1,\ldots,\omega_d\}$ and 
$\gen\sigma=\{\omega_1,\ldots,\omega_r\}$. 
Then the left hand side is the sum of the terms 
$N(-z_\tau+k_1\omega_1+\cdots+k_r\omega_r)^{-s}$ 
where $(k_1,\ldots,k_r)$ runs through $r$-tuples of integers 
such that $k_1,\ldots,k_d\geq 1$ and $k_{d+1},\ldots,k_r\geq 0$. 
For such an $r$-tuple $(k_1,\ldots,k_r)$, let $\rho$ be the cone 
generated by $\omega_j$'s such that $k_j\geq 1$. 
Then the same term $N(-z_\tau+k_1\omega_1+\cdots+k_r\omega_r)^{-s}$ appears 
in the $\rho$-part of the right hand side. 
This makes a bijection of the terms, hence proves the identity. 
\end{proof}

\begin{lem}\label{lem:combi2}
For any $\rho\in\tilde{\Phi}$, we have 
$\sum_{\sigma\in\tilde{\Phi},\rho\prec\sigma}(-1)^{d(\sigma)}=(-1)^n$. 
\end{lem}
\begin{proof}
If we consider the quotient $\R^n/\R\rho\cong\R^{n-d(\rho)}$, 
then the images of $\sigma\in\tilde{\Phi}$ such that $\rho\prec\sigma$ 
together with the point at infinity form a cell decomposition of the sphere 
$S^{n-d(\rho)}=\R^{n-d(\rho)}\cup\{\infty\}$. 
From two expressions of the Euler characteristic 
\[1+\sum_{\sigma}(-1)^{d(\sigma)-d(\rho)}
=\chi(S^{n-d(\sigma)})=1+(-1)^{n-d(\sigma)}, \]
we deduce the lemma. 
\end{proof}

By using these lemmas, we obtain
\begin{align*}
\sum_{\sigma\in\tilde{\Phi},\tau\prec\sigma}
(-1)^{d(\sigma)}\zeta\bigl(s,\abs{\tau}-z_\tau,\sigma\bigr)
&=\sum_{\sigma\in\tilde{\Phi},\tau\prec\sigma}(-1)^{d(\sigma)}
\sum_{\tau\prec\rho\prec\sigma}
\zeta\bigl(s,\abs{\rho}-z_\tau,\rho\bigr)\\
&=\sum_{\rho\in\tilde{\Phi},\tau\prec\rho}
\zeta\bigl(s,\abs{\rho}-z_\tau,\rho\bigr)
\sum_{\sigma\in\tilde{\Phi},\rho\prec\sigma}(-1)^{d(\sigma)}\\
&=\sum_{\rho\in\tilde{\Phi},\tau\prec\rho}
(-1)^n\zeta\bigl(s,\abs{\rho}-z_\tau,\rho\bigr) 
\end{align*}
for $\tau\in\Phi$ and $z_\tau\in P_\tau^\circ$. 
This proves the desired identity \eqref{eq:combi3} and completes the proof 
of Proposition \ref{prop:combi}. 
\end{proof}

\subsection{The factorization of $X(\frC)$}
Now let us prove our first main result: 

\begin{thm}\label{thm:factorization}
If we put 
\begin{equation}\label{eq:X_i(C)}
X_i(\frC)=\prod_{\sigma\in\Phi}\prod_{z_\sigma\in P_\sigma\cap(z+\frb)}
\Sine(z_\sigma^{(i)},\sigma^{(i)}), 
\end{equation}
for $i=1,\ldots,n$, then we have the factorization formula 
\[X(\frC)=\prod_{i=1}^nX_i(\frC). \]
\end{thm}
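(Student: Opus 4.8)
The plan is to show that $\log X(\frC)$ equals $\xi_\frf'(0,z+\frb)$ and then to unfold this derivative cone by cone using the results already in hand. First I would combine the two partial-zeta expressions \eqref{eq:zeta(s,C)} and \eqref{eq:zeta(s,muC)} into
\[
-\zeta(s,\frC)+(-1)^n\zeta(s,\mu\frC)
=N(\frb^{-1}\frf)^{-s}\,\xi_\frf(s,z+\frb),
\]
with $\xi_\frf$ as in \eqref{eq:xi(z+b)}. Differentiating at $s=0$ by the product rule, the factor $N(\frb^{-1}\frf)^{-s}$ contributes only through the value $\xi_\frf(0,z+\frb)$; so the key preliminary step is to verify $\xi_\frf(0,z+\frb)=0$. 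This I expect to obtain exactly as \eqref{eq:xi(0)} was obtained: apply Proposition \ref{prop:combi} to reduce to the vanishing $\xi(0,z_\sigma,\sigma)=0$ of each cone term, which in turn follows from Proposition \ref{prop:s=0} together with the symmetry $B_l(1-x)=(-1)^lB_l(x)$ of the Bernoulli polynomials. Once this holds, the product rule collapses to $\log X(\frC)=\xi_\frf'(0,z+\frb)$.

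Next I would differentiate the decomposition \eqref{eq:combi} of Proposition \ref{prop:combi} at $s=0$, obtaining
\[
\log X(\frC)=\xi_\frf'(0,z+\frb)
=\sum_{\sigma\in\Phi}\sum_{z_\sigma\in P_\sigma\cap(z+\frb)}
\xi'(0,z_\sigma,\sigma).
\]
By the definition of the multiple sine function, each summand $\xi'(0,z_\sigma,\sigma)$ equals $\log\Sine(z_\sigma,\sigma)$, and the factorization formula \eqref{eq:Sine_factor} rewrites this as $\sum_{i=1}^n\log\Sine(z_\sigma^{(i)},\sigma^{(i)})$. Substituting and interchanging the finite sum over $i$ with the sum over $(\sigma,z_\sigma)$ then gives
\[
\log X(\frC)
=\sum_{i=1}^n\sum_{\sigma\in\Phi}\sum_{z_\sigma\in P_\sigma\cap(z+\frb)}
\log\Sine(z_\sigma^{(i)},\sigma^{(i)})
=\sum_{i=1}^n\log X_i(\frC),
\]
which is the claimed formula upon exponentiating.

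The one point demanding genuine care is that $\Sine(z_\sigma,\sigma)$ is defined only when the coefficients of $z_\sigma$ lie in $[0,1]$ and $z_\sigma\ne 0,\abs{\sigma}$. The coefficient condition is immediate, since every $z_\sigma\in P_\sigma$ has coefficients in $(0,1]$. For the endpoint condition I would invoke the standing hypothesis $z\notin\frb$: each generator in $\gen\sigma$ lies in $\frb$, so $0$ and $\abs{\sigma}$ lie in $\frb$, whereas $z_\sigma\in z+\frb$ is never congruent to $0$ modulo $\frb$; hence $z_\sigma\ne 0,\abs{\sigma}$. Beyond this bookkeeping there is no real obstacle: the substantive combinatorial difficulty—reconciling the sign $(-1)^{d(\sigma)}$ appearing in $\Sine$ with the $(-1)^n$ in $\xi_\frf$, and absorbing the boundary mismatch of the parallelotopes—has already been discharged in Proposition \ref{prop:combi}. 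The factorization is therefore essentially a formal consequence of \eqref{eq:combi} together with the factorization \eqref{eq:Sine_factor} of the multiple sine function.
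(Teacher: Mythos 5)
Your proposal is correct and follows essentially the same route as the paper: establish $\xi_\frf(0,z+\frb)=0$ via Proposition \ref{prop:combi} and \eqref{eq:xi(0)} so that the product rule gives $\log X(\frC)=\xi_\frf'(0,z+\frb)$, then differentiate \eqref{eq:combi} and apply \eqref{eq:Sine_factor} termwise. Your added verification that each $\Sine(z_\sigma,\sigma)$ is well defined (coefficients in $(0,1]$ and $z_\sigma\ne 0,\abs{\sigma}$ via $z\notin\frb$) is exactly the use of the standing hypothesis $\frf\subsetneq O_F$ that the paper flags earlier but leaves implicit in its proof.
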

\begin{proof}
First, Proposition \ref{prop:combi} and \eqref{eq:xi(0)} 
yield that $\xi_\frf(0,z+\frb)=0$ and hence 
\[\log X(\frC)
=\frac{d}{ds}\Bigl(N(\frb^{-1}\frf)^{-s}\,\xi_\frf(s,z+\frb)\Bigr)\Big|_{s=0}
=\xi_\frf'(0,z+\frb). \]
Then the claimed formula follows from 
Proposition \ref{prop:combi} and \eqref{eq:Sine_factor}. 
\end{proof}

Looking at the obvious relation $X(\mu\frC)=X(\frC)^{(-1)^{n-1}}$, 
it is natural to ask whether each factor $X_i(\frC)$ satisfies 
the analogous relation. 
It is indeed the case: 

\begin{prop}\label{eq:X_i(muC)}
We have $X_i(\mu\frC)=X_i(\frC)^{(-1)^{n-1}}$ for each $i=1,\ldots,n$. 
\end{prop}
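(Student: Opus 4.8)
The plan is to prove the relation $X_i(\mu\frC) = X_i(\frC)^{(-1)^{n-1}}$ directly from the product expression \eqref{eq:X_i(C)} by examining how the data defining $X_i$ transforms when we pass from $\frC$ to $\mu\frC$. Recall from the discussion following \eqref{eq:zeta(s,C)} that for the class $\mu\frC$ we may use the representative $\fra_0' = \mu\fra_0$ and the element $z' = -\mu z$, and crucially this gives the \emph{same} ideal $\frb = z'(\fra_0')^{-1}\frf$ and the \emph{same} cone decomposition data, but replaces the base point $z+\frb$ by $-z+\frb$. So $X_i(\mu\frC)$ is computed by the same formula but with the sum running over $z_\sigma \in P_\sigma \cap (-z+\frb)$ instead of $P_\sigma \cap (z+\frb)$.

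First I would invoke the bijection $z_\sigma \mapsto \overline{z_\sigma}$ from $P_\sigma \cap (z+\frb)$ onto $P_\sigma \cap (-z+\frb)$ introduced in the proof of Proposition \ref{prop:combi}, which sends $x_1\omega_1+\cdots+x_d\omega_d$ to $\langle -x_1\rangle\omega_1+\cdots+\langle -x_d\rangle\omega_d$. This lets me rewrite $X_i(\mu\frC)$ as a product over $z_\sigma \in P_\sigma\cap(z+\frb)$ of factors $\Sine(\overline{z_\sigma}^{(i)}, \sigma^{(i)})$. The task then reduces to comparing $\Sine(\overline{z_\sigma}^{(i)}, \sigma^{(i)})$ with $\Sine(z_\sigma^{(i)}, \sigma^{(i)})$, term by term, after suitably reorganizing the sum. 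The natural tool is the same face-stratification used earlier: each $z_\sigma \in P_\sigma$ lies in the relative interior $P_\tau^\circ$ of a unique face $\tau \prec \sigma$ (via $z_\sigma = z_\tau + \abs{\sigma}-\abs{\tau}$), and on that interior one has the clean relation $\overline{z_\sigma} = \abs{\sigma} - z_\tau = \abs{\sigma}-z_\sigma$ only in the top-dimensional case; in general the combinatorial bookkeeping of boundary faces must be handled as in Proposition \ref{prop:combi}.

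The key computational input is the behaviour of the single multiple sine function $\Sine_d$ under the reflection $z \mapsto \abs{\underline{\omega}}-z$. From the definition $\Sine_{m,d}(z,\underline{\omega}) = \exp(\partial_s \xi_{m,d}(s,z,\underline{\omega})|_{s=0})$ together with \eqref{eq:xi}, the substitution $z \mapsto \abs{\underline{\omega}}-z$ carries $\xi_{m,d}(s,z,\underline{\omega})$ to $(-1)^d \xi_{m,d}(s,z,\underline{\omega})$, so that $\Sine_d(\abs{\underline{\omega}}^{(i)}-z^{(i)}, \sigma^{(i)}) = \Sine_d(z^{(i)},\sigma^{(i)})^{(-1)^d}$. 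I would establish this reflection identity first, as it is the local engine that converts the global sign $(-1)^{n-1}$ into per-cone signs $(-1)^{d(\sigma)}$. Combining the reflection identity with the face decomposition and Lemma \ref{lem:combi2} (the Euler characteristic count giving $\sum_{\rho\prec\sigma}(-1)^{d(\sigma)} = (-1)^n$) should collapse the alternating signs into the single global exponent.

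The main obstacle I anticipate is precisely this sign reconciliation on the boundary of $P_\sigma$: the reflection $z_\sigma \mapsto \overline{z_\sigma}$ agrees with $\abs{\sigma}-z_\sigma$ only in the interior, and for $z_\sigma$ on a proper face the discrepancy must be absorbed by summing the reflection identity over the faces $\tau \prec \sigma$ and applying Lemma \ref{lem:combi1} and Lemma \ref{lem:combi2}, exactly the mechanism that powered Proposition \ref{prop:combi}. I would therefore expect the cleanest route to be not a term-by-term comparison of $X_i$ but rather to work at the level of the derivative $\xi_\frf'(0,\cdot)$ restricted to the $i$-th coordinate: define $\xi_\frf^{(i)}$ to be the $i$-th summand of the factorization \eqref{eq:xi'(0)}, show via Proposition \ref{prop:combi} that $\log X_i(\frC) = \xi_\frf^{(i)}{}'(0,z+\frb)$, and then observe that replacing $z$ by $-z$ (i.e.\ passing to $\mu\frC$) negates this coordinate-wise $\xi$ up to the sign $(-1)^{n-1}$, which is forced by the same $\xi \mapsto (-1)^n\xi$ symmetry as in \eqref{eq:xi}. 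This reduces the whole proposition to the already-proven combinatorial identity plus the elementary reflection symmetry, avoiding any fresh boundary analysis.
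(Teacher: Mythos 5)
Your middle paragraphs follow the paper's own route: realize $X_i(\mu\frC)$ as a product over $P_\sigma\cap(-z+\frb)$, pull it back by the bijection $z_\sigma\mapsto\overline{z_\sigma}$, stratify $P_\sigma$ by the relative interiors $P_\tau^\circ$ of its faces, apply a reflection identity cone by cone, and collapse the resulting signs with Lemma \ref{lem:combi2}. However, two of your steps fail as written. The reflection identity has the wrong sign: from \eqref{eq:xi},
\[\xi_{m,d}(s,\abs{\underline{\omega}}-z,\underline{\omega})
=-\zeta_{m,d}(s,\abs{\underline{\omega}}-z,\underline{\omega})
+(-1)^d\zeta_{m,d}(s,z,\underline{\omega})
=(-1)^{d-1}\xi_{m,d}(s,z,\underline{\omega}),\]
so the substitution $z\mapsto\abs{\underline{\omega}}-z$ multiplies $\xi_{m,d}$ by $(-1)^{d-1}$, not $(-1)^d$, and the per-cone exponent is $(-1)^{d(\sigma)-1}$ (exactly the sign appearing in the paper's chain of equalities). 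The discrepancy is fatal rather than cosmetic: combined with Lemma \ref{lem:combi2}, per-cone signs $(-1)^{d(\sigma)}$ collapse to the global exponent $(-1)^n$, i.e.\ to the false relation $X_i(\mu\frC)=X_i(\frC)^{(-1)^n}$; only $(-1)^{d(\sigma)-1}$ yields the claimed $(-1)^{n-1}$.

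The second, more serious, gap is the ``cleanest route'' of your final paragraph, which is circular. The symmetry $\xi_\frf(s,-z+\frb)=(-1)^{n-1}\xi_\frf(s,z+\frb)$ concerns the full function; together with Proposition \ref{prop:combi} it proves only that $\prod_{i}X_i(\mu\frC)=\bigl(\prod_iX_i(\frC)\bigr)^{(-1)^{n-1}}$, which is the ``obvious relation'' $X(\mu\frC)=X(\frC)^{(-1)^{n-1}}$ that the paper records just before the proposition. It does not force the same behaviour of each coordinate summand $\xi_\frf^{(i)}$: that summand is a sum of functions $\xi_d(s,z_\sigma^{(i)},\sigma^{(i)})$ over cones of varying dimension, each of which reflects with its own sign $(-1)^{d(\sigma)-1}$, and showing that these assemble into the single sign $(-1)^{n-1}$ is precisely the boundary/face analysis you propose to skip. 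There is also a point you leave implicit that the paper must (and does) address: the face-stratification argument of Proposition \ref{prop:combi} moves cones by units $\eps\in E_\frf$, which is harmless there because the $n$-dimensional zeta functions are $E_\frf$-invariant (norm invariance), but the single-coordinate functions are not; the paper justifies those translations by the homogeneity \eqref{eq:Sine_homogeneity}. So the correct proof is your middle-paragraph plan --- carried out with the sign $(-1)^{d(\sigma)-1}$ and with homogeneity replacing norm invariance --- not the shortcut of the last paragraph.
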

\begin{proof}
The same argument as the proof of Proposition \ref{prop:combi} 
leads to the identity 
\begin{align*}
\log X_i(\frC)&=\sum_{\sigma\in\Phi}\sum_{z_\sigma\in P_\sigma\cap(z+\frb)}
\xi'(0,z_\sigma^{(i)},\sigma^{(i)})\\
&=\sum_{\tau\in\Phi}\sum_{z_\tau\in P_\tau^\circ\cap(z+\frb)}
\sum_{\sigma\in\tilde{\Phi},\,\tau\prec\sigma}
\xi'\bigl(0,(z_\tau+\abs{\sigma}-\abs{\tau})^{(i)},\sigma^{(i)}\bigr)\\
&=\sum_{\tau\in\Phi}\sum_{z_\tau\in P_\tau^\circ\cap(z+\frb)}
\sum_{\sigma\in\tilde{\Phi},\,\tau\prec\sigma}
(-1)^{d(\sigma)-1}
\xi'\bigl(0,(\abs{\tau}-z_\tau)^{(i)},\sigma^{(i)}\bigr)\\
&=\sum_{\tau\in\Phi}\sum_{z_\tau\in P_\tau^\circ\cap(z+\frb)}
\sum_{\rho\in\tilde{\Phi},\,\tau\prec\rho}
(-1)^{n-1}
\xi'\bigl(0,(\abs{\rho}-z_\tau)^{(i)},\rho^{(i)}\bigr)\\
&=(-1)^{n-1}\sum_{\rho\in\Phi}\sum_{z_\rho\in P_\rho\cap(z+\frb)}
\xi'(0,\overline{z_\rho}^{(i)},\rho^{(i)})\\
&=(-1)^{n-1}\sum_{\rho\in\Phi}\sum_{z_\rho\in P_\rho\cap(-z+\frb)}
\xi'(0,z_\rho^{(i)},\rho^{(i)})\\
&=(-1)^{n-1}\log X_i(\mu\frC). 
\end{align*}
Here the second and the fifth equalities, 
which correspond to \eqref{eq:combi4} and \eqref{eq:combi4'}, 
follow from the homogeneity property \eqref{eq:Sine_homogeneity}. 
\end{proof}

\section{The invariance of $X_i(\frC)$}
We keep the notations in the previous section. 

The definition \eqref{eq:X_i(C)} of $X_i(\frC)$ depends, a priori, 
on the following auxiliary choices. 
\begin{enumerate}
\item an integral representative $\fra_0\in\frC$; 
\item a totally positive number $z\in F_+$; 
\item a finite collection $\Phi$ of rational cones 
explained in Section 3.2; 
\item a set of generators $\gen\sigma$ from $\frb=z\fra_0^{-1}\frf$ 
for each $\sigma\in\Phi$. 
\end{enumerate}
In this section, we will prove that 
the value $X_i(\frC)$ is invariant under any change of these choices. 

\subsection{Preliminary arguments}\label{subsec:InvarPrelim}
It is easy to show 
the independence from $\fra_0$, $z$ and $\gen\sigma$. 
If we replace $z$ by $\lambda z$ for some $\lambda\in F_+$, 
then the definition \eqref{eq:X_i(C)} becomes 
\[\prod_{\sigma\in\Phi}\prod_{z_\sigma\in P_\sigma\cap(z+\frb)}
\Sine\bigl((\lambda z_\sigma)^{(i)},(\lambda\sigma)^{(i)}\bigr)
=\prod_{\sigma\in\Phi}\prod_{z_\sigma\in P_\sigma\cap(z+\frb)}
\Sine(\lambda^{(i)} z_\sigma^{(i)},\lambda^{(i)} \sigma^{(i)}), \]
which is equal to the original one by the homogeneity 
\eqref{eq:Sine_homogeneity} of the multiple sine functions. 
A change of $\fra_0$ amounts to a change of $z$. 
The invariance under a change of $\gen\sigma$ is 
an easy consequence of the distribution relation \eqref{eq:DistRel} 
of the multiple zeta functions. 

To prove the independence from the cone decomposition $\Phi$, 
it suffices to consider only two types of change; 
(1) replacing some $\sigma\in\Phi$ by a translation $\eps\sigma$ 
for some $\eps\in E_\frf$, and 
(2) subdividing some $\sigma\in\Phi$ into a finite sum of rational cones. 
The case (1) can be settled again 
by using the homogeneity: 
\begin{align*}
\prod_{z_{\eps\sigma}\in P_{\eps\sigma}\cap(z+\frb)}
\Sine(z_{\eps\sigma}^{(i)},(\eps\sigma)^{(i)})
&=\prod_{z_\sigma\in P_\sigma\cap(z+\frb)}
\Sine(\eps^{(i)}z_\sigma^{(i)},\eps^{(i)}\sigma^{(i)})\\
&=\prod_{z_\sigma\in P_\sigma\cap(z+\frb)}
\Sine(z_\sigma^{(i)},\sigma^{(i)}). 
\end{align*}
On the other hand, the case (2) is rather difficult, and 
we settle it by introducing the technique of upper and lower closures. 

\subsection{The upper and lower closures}
We fix an index $h\in\{1,\ldots,n\}$, 
and regard the $h$-th coordinate $x^{(h)}$ of a point $x\in\R^n$ 
as the `height' of $x$. We denote by $e_h\in\R^n$ 
the unit vector of the direction of the $h$-th axis, 
i.e.\ $e_h^{(i)}=\delta_{hi}$ (the Kronecker delta). 

\begin{defn}\upshape 
Let $\sigma$ be an $n$-dimensional cone in $\R^n$ and 
$\tau$ a face of it. 
We say that $\tau$ is an \emph{upper face} (resp.\ \emph{lower face}) 
of $\sigma$ and write $\tau\uprec\sigma$ (resp.\ $\tau\lprec\sigma$), 
if there exists $x\in\tau$ such that 
$x-e_h$ (resp.\ $x+e_h$) belongs to $\sigma$.  
\end{defn}

These conditions can be rephrased as follows: 

\begin{prop}\label{prop:UpperCriterion}
Let $\sigma$ be an $n$-dimensional cone 
and $\omega_1,\ldots,\omega_n\in\R^n$ its generators. 
Using the linear expression 
$e_h=a_1\omega_1+\cdots+a_n\omega_n$ 
with respect to the basis 
$\omega_1,\ldots,\omega_n$, put 
\[\Omega_+:=\{\omega_j\mid a_j\geq 0\},\qquad 
\Omega_-:=\{\omega_j\mid a_j\leq 0\}. \] 
Then, for a face $\tau$ of $\sigma$, the following are equivalent: 
\begin{enumerate}
\item $\tau$ is an upper (resp.\ lower) face of $\sigma$. 
\item The cone generated by $\Omega_+$ (resp.\ $\Omega_-$) 
is a face of $\tau$. 
\item For any $x\in\tau$, $x-te_h$ (resp.\ $x+te_h$) 
belongs to $\sigma$ for sufficiently small $t>0$. 
\end{enumerate}
\end{prop}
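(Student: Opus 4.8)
The plan is to prove the equivalence (i)$\Leftrightarrow$(ii)$\Leftrightarrow$(iii) by working directly with the barycentric-type coordinates $a_1,\ldots,a_n$ of $e_h$ relative to the generators $\omega_1,\ldots,\omega_n$. The whole statement is about the upper case; the lower case is identical after replacing $e_h$ by $-e_h$, which flips every $a_j$ and swaps $\Omega_+$ with $\Omega_-$, so I would prove the upper case and remark that the lower case follows by symmetry. I would order the arguments as (iii)$\Rightarrow$(i) (trivial, since (iii) produces a witness $x$), then (i)$\Rightarrow$(ii), then (ii)$\Rightarrow$(iii), closing the cycle.

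Let me set up coordinates. Say $\tau$ is generated by $\{\omega_j \mid j\in T\}$ for some subset $T\subseteq\{1,\ldots,n\}$. A point $x\in\tau$ is $x=\sum_{j\in T}x_j\omega_j$ with all $x_j>0$, and then $x-e_h=\sum_{j}(x_j-a_j)\omega_j$, where $x_j=0$ for $j\notin T$. This vector lies in $\sigma$ exactly when all coefficients are positive, i.e.\ $x_j-a_j>0$ for all $j$. For $j\notin T$ this forces $-a_j>0$, i.e.\ $a_j<0$; for $j\in T$ we need $x_j>a_j$, which is achievable for suitable $x_j>0$ precisely when no obstruction from sign arises. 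This is the computational heart, and the key observation I would isolate is that (i) holds iff $a_j<0$ for every $j\notin T$ (for $j\in T$ the coefficient $x_j$ is free to be large, so those coordinates impose no constraint).

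Now I would translate this into the face condition (ii). The set $\Omega_+=\{\omega_j\mid a_j\geq 0\}$ generates a cone that is a face of $\tau$ precisely when $\{j\mid a_j\geq 0\}\subseteq T$, which is exactly the condition ``$a_j<0$ for all $j\notin T$'' derived above. So (i)$\Leftrightarrow$(ii) is immediate once the coefficient computation is in hand. For (ii)$\Rightarrow$(iii): if $\Omega_+\subseteq\tau$, then for \emph{any} $x\in\tau$ and any $j\notin T$ we have $a_j<0$, so the coefficient of $\omega_j$ in $x-te_h$ is $-ta_j>0$ for all $t>0$; for $j\in T$ the coefficient is $x_j-ta_j$, which stays positive for $t$ small since $x_j>0$. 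Hence $x-te_h\in\sigma$ for sufficiently small $t>0$, giving (iii) with the uniform smallness claim.

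The one subtlety I want to flag, rather than an outright obstacle, is the treatment of generators $\omega_j$ with $a_j=0$. Such $j$ lie in $\Omega_+\cap\Omega_-$, and the definition of $\uprec$ only demands the existence of \emph{one} witness $x$, so these borderline generators must be allowed to lie in $\tau$; this is why $\Omega_+$ uses $a_j\geq 0$ rather than $a_j>0$, and why the criterion is ``$a_j<0$ for $j\notin T$'' rather than ``$a_j\leq 0$.'' I would state the coefficient computation carefully enough that the $a_j=0$ case is visibly consistent across all three conditions, since this is the only place where a sloppy inequality would break the equivalence. Everything else is routine linear algebra on cone coordinates.
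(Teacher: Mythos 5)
Your proposal is correct and follows essentially the same route as the paper: both proofs expand $x-te_h$ (or $x-e_h$) in the basis $\omega_1,\ldots,\omega_n$, observe that the coefficients indexed by generators of $\tau$ impose no constraint for small $t$ (or large $x_j$) while those outside $\tau$ force $a_j<0$, and translate this into the condition $\Omega_+\subseteq\gen\tau$, closing the cycle with the observation that (iii)$\Rightarrow$(i) is immediate. Your explicit flagging of the borderline case $a_j=0$ is a nice clarification but does not change the argument.
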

\begin{proof}
We consider only the upper face conditions. 

Assume that $\tau$ is generated by $\omega_1,\ldots,\omega_d$ 
and let $x=b_1\omega_1+\cdots+b_d\omega_d$ be a point of $\tau$. 
Then, for $t>0$, the point 
\[x-te_h=(b_1-a_1t)\omega_1+\cdots+(b_d-a_dt)\omega_d
-a_{d+1}t\omega_{d+1}-\cdots-a_nt\omega_n\]
belongs to $\sigma$ if and only if the inequalities
$b_j>a_jt$ ($1\leq j\leq d$) and $a_j<0$ ($d+1\leq j\leq n$) hold. 
Since $b_j$ are positive, 
the former $d$ inequalities are always satisfied 
when $t$ is sufficiently small. 
On the other hand, the latter $n-d$ inequalities hold 
if and only if $\{\omega_1,\ldots,\omega_d\}$ contains $\Omega_+$. 
This proves the implications (i)$\implies$(ii)$\implies$(iii), 
while (iii)$\implies$(i) is obvious. 
\end{proof}

The notion of upper and lower faces is particularly useful 
when we treat rational cones 
(with respect to the $\Q$-structure $F\subset\R^n$). 
The basic fact is the following: 

\begin{lem}\label{lem:NonVertical}
For any rational cone $\tau$ of dimension less than $n$, 
the $\R$-subspace generated by $\tau$ does not contain $e_h$. 
\end{lem}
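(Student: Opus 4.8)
The plan is to exploit the special geometry of the embedding $F\hookrightarrow\R^n$, namely that the standard inner product on $\R^n$ restricts on $F$ to the trace form of $F/\Q$. First I would let $V$ denote the $\R$-subspace spanned by $\tau$; since $\tau$ is rational of dimension $d<n$, the space $V$ is the $\R$-span of finitely many elements $\omega_1,\ldots,\omega_d\in F$ and has $\dim_\R V=d$.

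Next I would record the key identity: for $x,y\in F$, writing $\langle x,y\rangle=\sum_{i=1}^n x^{(i)}y^{(i)}$ for the standard inner product of the corresponding vectors in $\R^n$, one has $\langle x,y\rangle=\mathrm{Tr}_{F/\Q}(xy)\in\Q$. Thus the inner product takes rational values on $F\times F$, and since the trace form of a number field is non-degenerate, each functional $x\mapsto\langle x,\omega_j\rangle$ is the $\R$-linear extension of a $\Q$-rational functional on $F$. Intersecting the corresponding rational hyperplanes, the orthogonal complement $V^\perp$ in $\R^n$ is again a rational subspace, i.e.\ it is the $\R$-span of $V^\perp\cap F$, and $\dim_\R V^\perp=n-d\geq 1$.

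The conclusion then follows from a purely field-theoretic observation. Because $V^\perp$ is rational and nonzero, I may choose a nonzero element $\alpha\in V^\perp\cap F$. If $e_h$ belonged to $V$, then in particular $e_h\perp\alpha$, which reads $\langle e_h,\alpha\rangle=\alpha^{(h)}=0$. But $\alpha$ is a nonzero element of the field $F$, so each conjugate $\alpha^{(i)}$—being the image of $\alpha$ under an embedding of $F$ into $\R$—is nonzero; equivalently $N(\alpha)=\prod_{i=1}^n\alpha^{(i)}\neq 0$. This contradiction shows $e_h\notin V$, which is the assertion of the lemma.

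The substantive step is the passage from rationality of $V$ to rationality of $V^\perp$, which rests entirely on the identification of the standard inner product with the trace form; once this is in place, the nonvanishing of every conjugate of a nonzero field element closes the argument at once. I expect no serious obstacle beyond making the trace-form identification explicit, and in particular the hypothesis $d<n$ enters only to guarantee that $V^\perp$ contains a nonzero element of $F$.
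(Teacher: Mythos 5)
Your proof is correct and is essentially the paper's own argument: both identify the standard inner product on $\R^n$ with the trace form of $F/\Q$, use its non-degeneracy to produce a nonzero $\alpha\in F$ orthogonal to the span of $\tau$, and conclude from $\alpha^{(h)}\neq 0$ (every conjugate of a nonzero field element is nonzero) that $e_h$ cannot lie in that span. The only cosmetic difference is that you phrase this via rationality of the orthogonal complement $V^\perp$ inside $\R^n$, whereas the paper takes the $\Q$-orthogonal complement inside $F$ and extends scalars.
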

\begin{proof}
Let $V$ denote the $\Q$-subspace of $F$ generated by (rational) 
generators of $\tau$. Since the trace form 
$\langle x,y\rangle=x^{(1)}y^{(1)}+\cdots+x^{(n)}y^{(n)}$ 
on $F$ is non-degenerate, 
there exists a nonzero element $x\in F$ orthogonal to $V$. 
Then, in $\R^n=F\otimes\R$, the inner product 
$\langle x,e_h\rangle=x^{(h)}$ is nonzero, 
which means that $x$ is not orthogonal to $e_h$. 
Hence $e_h$ does not belong to $V\otimes\R$. 
\end{proof}

\begin{prop}\label{prop:ULReciprocity}
For a face $\tau$ of a rational $n$-dimensional cone $\sigma$, we have 
\[\sum_{\tau\prec\rho\uprec\sigma}(-1)^{d(\rho)}=
\begin{cases}(-1)^n&(\tau\lprec\sigma),\\
0&(\text{otherwise}).\end{cases}\]
\end{prop}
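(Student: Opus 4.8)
The plan is to fix the rational $n$-dimensional cone $\sigma$ with generators $\omega_1,\ldots,\omega_n$, expand $e_h=a_1\omega_1+\cdots+a_n\omega_n$, and work entirely through the combinatorial characterization of upper and lower faces given in Proposition \ref{prop:UpperCriterion}(ii). By Lemma \ref{lem:NonVertical}, since $\sigma$ is rational and $n$-dimensional, $e_h$ is genuinely interior to the $\R$-span, and in particular \emph{no} coefficient $a_j$ can vanish: if some $a_j=0$ then $e_h$ would lie in the $\R$-span of the remaining $n-1$ rational generators, contradicting the lemma applied to that facet. Hence each $a_j$ is strictly positive or strictly negative, and the sets $\Omega_+=\{\omega_j\mid a_j>0\}$ and $\Omega_-=\{\omega_j\mid a_j<0\}$ partition the full generating set $\{\omega_1,\ldots,\omega_n\}$. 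Let $\sigma_+$ and $\sigma_-$ denote the faces of $\sigma$ generated by $\Omega_+$ and $\Omega_-$ respectively; these are complementary faces with $d(\sigma_+)+d(\sigma_-)=n$.

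Next I would translate the summation condition. By Proposition \ref{prop:UpperCriterion}(ii), a face $\rho\prec\sigma$ is an upper face exactly when $\sigma_+\prec\rho$, i.e.\ when $\rho$ contains all of $\Omega_+$. Writing $\rho$ as the cone generated by a subset $S\subseteq\{\omega_1,\ldots,\omega_n\}$, the conditions $\tau\prec\rho\uprec\sigma$ become $T\subseteq S$ (where $T$ indexes the generators of $\tau$) together with $\Omega_+\subseteq S$; equivalently $(T\cup\Omega_+)\subseteq S\subseteq\{\omega_1,\ldots,\omega_n\}$. The sum in question is therefore
\[\sum_{(T\cup\Omega_+)\subseteq S}(-1)^{\abs{S}}.\]
This is a sum of $(-1)^{\abs{S}}$ over all subsets $S$ sandwiched between the fixed lower bound $T\cup\Omega_+$ and the full set. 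A standard inclusion–exclusion evaluation applies: writing $m=\abs{T\cup\Omega_+}$, the free choices are the generators outside $T\cup\Omega_+$, and $\sum_S(-1)^{\abs S}=(-1)^m\sum_{k=0}^{n-m}\binom{n-m}{k}(-1)^k$, which vanishes unless $n-m=0$, in which case it equals $(-1)^n$.

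Thus the sum is nonzero precisely when $T\cup\Omega_+=\{\omega_1,\ldots,\omega_n\}$, i.e.\ when $T\supseteq\Omega_-$, which says exactly that $\sigma_-\prec\tau$; and by Proposition \ref{prop:UpperCriterion}(ii) (for lower faces) this is the condition $\tau\lprec\sigma$. In that case the single surviving term is $S=\{\omega_1,\ldots,\omega_n\}$, giving $(-1)^n$, exactly as claimed. I expect the main conceptual obstacle to be the clean verification that no $a_j$ vanishes --- everything downstream is routine binomial inclusion–exclusion, but it is precisely the rationality hypothesis, funneled through Lemma \ref{lem:NonVertical} applied to each facet, that forces the disjoint partition $\{\omega_1,\ldots,\omega_n\}=\Omega_+\sqcup\Omega_-$ and thereby makes the upper/lower face conditions into the complementary set-theoretic conditions that drive the computation. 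Once that partition is secured, the reciprocity between $\uprec$ and $\lprec$ is simply the statement that $S$ ranges over the full interval only when $T$ already absorbs $\Omega_-$.
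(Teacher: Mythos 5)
Your proposal is correct and follows essentially the same route as the paper's own proof: both use Lemma \ref{lem:NonVertical} to force the disjoint partition $\gen\sigma=\Omega_+\amalg\Omega_-$, translate $\tau\prec\rho\uprec\sigma$ via Proposition \ref{prop:UpperCriterion}(ii) into the condition that $\gen\rho$ contains $(\gen\tau)\cup\Omega_+$, and then evaluate the alternating sum over the resulting interval of faces by the binomial theorem, with the nonvanishing case $(\gen\tau)\cup\Omega_+=\gen\sigma$ identified with $\tau\lprec\sigma$. The paper phrases this by introducing the face $\rho_0$ generated by $(\gen\tau)\cup\Omega_+$ and counting cones $\rho_0\prec\rho\prec\sigma$ of each dimension, but this is the same computation as your subset-interval inclusion--exclusion.
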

\begin{proof}
Let $\gen\sigma=\{\omega_1,\ldots,\omega_n\}$ be 
a set of generators of $\sigma$ and 
$\Omega_\pm$ the subsets defined in Proposition \ref{prop:UpperCriterion}. 
Then Lemma \ref{lem:NonVertical} tells 
that $\gen\sigma=\Omega_+\amalg\Omega_-$. 
Thus, by Proposition \ref{prop:UpperCriterion}, 
$\tau$ is a lower face of $\sigma$ if and only if 
the union of $\gen\tau$ and $\Omega_+$ generates $\sigma$. 

Now let $\rho_0$ be the face generated by $(\gen\tau)\cup\Omega_+$. 
Then, again by Proposition \ref{prop:UpperCriterion}, 
the condition $\tau\prec\rho\uprec\sigma$ is 
equivalent to $\rho_0\prec\rho\prec\sigma$. 
Moreover, if we put $d_0=d(\rho_0)$, 
the number of $d$-dimensional cones $\rho$ 
such that $\rho_0\prec\rho\prec\sigma$ is ${n-d_0 \choose d-d_0}$. 
Thus the binomial theorem leads to the identity 
\[\sum_{\rho_0\prec\rho\prec\sigma}(-1)^{d(\rho)}=
(-1)^{d_0}\cdot\bigl(1+(-1)\bigr)^{n-d_0}=
\begin{cases}(-1)^n&(\rho_0=\sigma),\\
0&(\text{otherwise}).\end{cases}\]
As already mentioned, 
the condition $\rho_0=\sigma$ is equivalent to $\tau\lprec\sigma$, 
hence the proof is complete. 
\end{proof}

\begin{defn}\upshape 
We call the union of all upper faces of $\sigma$ 
(including $\sigma$ itself) 
the \emph{upper closure} of $\sigma$ and denote it by $\ucl{\sigma}$. 
The \emph{lower closure} $\lcl{\sigma}$ is defined similarly. 
\end{defn}

In the following, for any set $\Sigma$ of cones, 
we denote by $\Sigma_d$ the subset consisting of all $d$-dimensional cones 
in $\Sigma$. 

\begin{prop}\label{prop:Phi_n}
Let $\Phi$ and $\tilde{\Phi}$ be sets of rational cones 
considered in Section 3. Then we have 
\[\R^n_+=\coprod_{\sigma\in\tilde{\Phi}_n}\ucl{\sigma}
=\coprod_{\eps\in E_\frf}\coprod_{\sigma\in\Phi_n}\eps\,\ucl{\sigma}. \]
\end{prop}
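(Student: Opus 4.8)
The plan is to reduce the first equality to a purely local counting statement and then to obtain the second equality from the $E_\frf$-equivariance of the upper-closure operation. Since the closures of the cones in $\tilde{\Phi}$ form a fan and the cones are relatively open, every $x\in\R^n_+$ lies in a unique cone $\tau\in\tilde{\Phi}$. As every face of a cone in $\tilde{\Phi}$ again belongs to $\tilde{\Phi}$, a point $x$ lies in $\ucl{\sigma}$ precisely when $\tau$ is an upper face of $\sigma$ (including the case $\tau=\sigma$ when $\tau$ itself is $n$-dimensional). Hence the first equality is equivalent to the assertion that for every $\tau\in\tilde{\Phi}$ there is exactly one $\sigma\in\tilde{\Phi}_n$ with $\tau\uprec\sigma$.

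To establish this count, fix the cone $\tau$ containing $x$ and consider the ray $x-te_h$; since $\R^n_+$ is open, it lies in $\R^n_+$ for small $t>0$. By Proposition \ref{prop:UpperCriterion}, an $n$-dimensional $\sigma$ with $\tau\prec\sigma$ satisfies $\tau\uprec\sigma$ exactly when $x-te_h\in\sigma$ for all sufficiently small $t>0$. By Lemma \ref{lem:NonVertical}, $e_h$ lies in no subspace $\R\rho$ with $\rho\in\tilde{\Phi}$ of dimension $<n$; consequently the line $x+\R e_h$ meets each such $\R\rho$ in at most one point, so the segment $\{x-te_h\}$ cannot lie inside any cone of dimension $<n$ throughout a whole interval, and it crosses each wall (the $(n-1)$-dimensional cones) at most once. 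Combined with the local finiteness of $\tilde{\Phi}$ near $x$, only finitely many walls are crossed, so there is $\eps>0$ with $x-te_h$ staying in a single cone for $t\in(0,\eps)$; that cone is necessarily $n$-dimensional and is the unique $\sigma$ with $\tau\uprec\sigma$. This proves the count and hence the first equality.

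For the second equality it suffices to verify $\ucl{\eps\sigma}=\eps\,\ucl{\sigma}$ for each $\eps\in E_\frf$, since $\tilde{\Phi}_n=\coprod_{\eps\in E_\frf}\eps\Phi_n$ as a disjoint union of cones and the $\eps$-action preserves dimension. Writing a point of $\eps\tau$ as $\eps x$ with $x\in\tau$, the condition $\eps x-e_h\in\eps\sigma$ is equivalent to $x-\eps^{-1}e_h\in\sigma$; because $\eps$ is totally positive, $\eps^{-1}e_h=(\eps^{(h)})^{-1}e_h$ is a positive multiple of $e_h$, so by part (iii) of Proposition \ref{prop:UpperCriterion} this holds for all small scalings exactly when $\tau\uprec\sigma$. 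Thus $\eps\tau\uprec\eps\sigma$ if and only if $\tau\uprec\sigma$, which yields $\ucl{\eps\sigma}=\eps\,\ucl{\sigma}$ and completes the proof.

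The main obstacle is the middle step: guaranteeing that the ray $x-te_h$ enters one and the same top-dimensional cone for all small $t$, rather than immediately running along a lower-dimensional cone or oscillating between two $n$-dimensional cones. This is precisely where the non-verticality of rational proper faces (Lemma \ref{lem:NonVertical}) together with the local finiteness of $\tilde{\Phi}$ is indispensable.
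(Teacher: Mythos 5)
Your proof is correct and follows essentially the same route as the paper: both rest on the observation that, by Lemma \ref{lem:NonVertical} and local finiteness, the segment $\{x-te_h \mid 0<t<\delta\}$ lies in a single cone of $\tilde{\Phi}_n$ for small $\delta$, which is precisely the unique $\sigma$ with $x\in\ucl{\sigma}$, and both reduce the second equality to the relation $\ucl{\eps\sigma}=\eps\,\ucl{\sigma}$. The only difference is expository: you spell out the reduction to counting the $\sigma\in\tilde{\Phi}_n$ with $\tau\uprec\sigma$ and verify the equivariance relation (which the paper calls obvious) explicitly.
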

\begin{proof}
For any point $x\in\R^n_+$, Lemma \ref{lem:NonVertical} implies that 
the vertical line $\{x-te_h\mid t\in\R\}$ intersects 
with each $\tau'\in\tilde{\Phi}\setminus\tilde{\Phi}_n$ 
at at most one point. 
Since the cone decomposition $\tilde{\Phi}$ is locally finite, 
there exists $\delta>0$ such that the segment $\{x-te_h\mid 0<t<\delta\}$ 
lies in a single cone $\sigma\in\tilde{\Phi}_n$, and 
such $\sigma$ is obviously unique. 
This proves the first equality. The second follows from 
the obvious relation $\ucl{\eps\sigma}=\eps\,\ucl{\sigma}$. 
\end{proof}

\begin{prop}\label{prop:subdivision}
Let $\sigma$ be an $n$-dimensional rational cone and 
$\Sigma$ a finite set of rational cones such that 
$\sigma=\coprod_{\tau\in\Sigma}\tau$. Then we have 
\[\ucl{\sigma}=\coprod_{\sigma'\in\Sigma_n}\ucl{\sigma'} \]
and the same for the lower closures. 
\end{prop}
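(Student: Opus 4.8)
Let $\sigma$ be an $n$-dimensional rational cone and $\Sigma$ a finite set of rational cones such that $\sigma=\coprod_{\tau\in\Sigma}\tau$. Then we have
\[\ucl{\sigma}=\coprod_{\sigma'\in\Sigma_n}\ucl{\sigma'}\]
and the same for the lower closures.

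**Proof proposal.**

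The plan is to establish this set-theoretic identity pointwise, using the characterization of upper closures through the vertical perturbation direction $-e_h$, exactly as in the proof of Proposition \ref{prop:Phi_n}. The key geometric idea is that a point $x$ lies in $\ucl{\sigma}$ precisely when $x$ belongs to $\overline{\sigma}$ \emph{and} the downward segment $\{x-te_h\mid 0<t<\delta\}$ enters $\sigma$ for small $\delta>0$; this reformulation is essentially Proposition \ref{prop:UpperCriterion}(iii) combined with the definition of $\ucl{\sigma}$ as the union of upper faces.

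First I would prove disjointness of the right-hand side. Suppose $x\in\ucl{\sigma'}\cap\ucl{\sigma''}$ for two distinct $\sigma',\sigma''\in\Sigma_n$. Since $\Sigma$ is finite and its members are disjoint, the downward segment from $x$ must eventually lie in a single $n$-dimensional member of $\Sigma$ (here I invoke Lemma \ref{lem:NonVertical} to guarantee that the vertical line meets each lower-dimensional cone of $\Sigma$ in at most one point, so only finitely many crossings occur). But $x\in\ucl{\sigma'}$ forces that single cone to be $\sigma'$, and likewise $\sigma''$, a contradiction. Hence the union on the right is genuinely disjoint.

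Next I would prove the two inclusions. For the inclusion $\coprod\ucl{\sigma'}\subseteq\ucl{\sigma}$: if $x\in\ucl{\sigma'}$ for some $\sigma'\in\Sigma_n$, then $x\in\overline{\sigma'}\subseteq\overline{\sigma}$, and the downward segment from $x$ enters $\sigma'\subseteq\sigma$, so $x\in\ucl{\sigma}$. For the reverse inclusion $\ucl{\sigma}\subseteq\coprod\ucl{\sigma'}$: given $x\in\ucl{\sigma}$, the downward segment from $x$ enters $\sigma$ for small $t$; since $\sigma=\coprod_{\tau\in\Sigma}\tau$, by Lemma \ref{lem:NonVertical} the segment avoids the lower-dimensional $\tau$ (except at isolated points) and therefore lies, for sufficiently small $t>0$, in a single $\sigma'\in\Sigma_n$. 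This $\sigma'$ witnesses $x\in\ucl{\sigma'}$. The argument for lower closures is identical with $-e_h$ replaced by $+e_h$.

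The main obstacle I anticipate is the boundary bookkeeping: ensuring that a point $x$ lying on the shared boundary between two cones of $\Sigma$ is assigned to the correct $\sigma'$. This is precisely what the downward-perturbation criterion resolves, since the open segment $\{x-te_h\mid 0<t<\delta\}$ selects a unique $n$-dimensional cone regardless of which lower-dimensional faces contain $x$ itself; the non-verticality from Lemma \ref{lem:NonVertical} is what makes this selection well-defined and guarantees local finiteness of the crossings. Once the criterion is set up cleanly, both the disjointness and the two inclusions reduce to tracking which cone this distinguished segment enters.
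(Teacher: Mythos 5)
Your proof is correct and takes essentially the same route as the paper: both rest on the downward-perturbation criterion of Proposition \ref{prop:UpperCriterion}(iii) together with Lemma \ref{lem:NonVertical}, which guarantees that the segment $\{x-te_h \mid 0<t<\delta\}$ avoids the lower-dimensional cones of $\Sigma$ and thus selects a unique $n$-dimensional member. The paper merely packages your three pointwise steps (disjointness and the two inclusions) into a single identity of characteristic functions, $f_{\ucl{\sigma}}(x)=\lim_{t\to+0}f_{\sigma}(x-te_h)=\sum_{\sigma'\in\Sigma_n}f_{\ucl{\sigma'}}(x)$, obtained by passing to the limit.
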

\begin{proof}
Let us denote by $f_A$ the characteristic function of a subset $A\subset\R^n$. 
Then we have $f_\sigma=\sum_{\tau\in\Sigma}f_\tau$ by assumption. 
In particular, 
\[f_\sigma(x)=\sum_{\sigma'\in\Sigma_n}f_{\sigma'}(x)\]
holds whenever $x\notin\coprod_{\tau\in\Sigma\setminus\Sigma_n}\tau$. 
On the other hand, the condition (iii) of 
Proposition \ref{prop:UpperCriterion} shows that 
\[f_{\ucl{\sigma}}(x)=\lim_{t\to +0}f_{\sigma}(x-te_h)\]
for any $x$, and the same for each $\sigma'\in\Sigma_n$. 
Hence we obtain the desired formula 
\[f_{\ucl{\sigma}}(x)=\sum_{\sigma'\in\Sigma_n}f_{\ucl{\sigma'}}(x)\]
by passage to the limit, since the condition 
$x-te_h\notin\coprod_{\tau\in\Sigma\setminus\Sigma_n}\tau$ is satisfied
for any sufficiently small $t>0$ by Lemma \ref{lem:NonVertical}. 
\end{proof}

\subsection{The invariance under a subdivision}
We shall finish the proof of our second main result. 
The final step is the following formula: 

\begin{prop}\label{prop:ULDecomposition}
We have an expression 
\[\log X_i(\frC)
=\sum_{\sigma\in\Phi_n}\frac{d}{ds}
\Biggl(-\sum_{\beta\in\ucl{\sigma}\cap(z+\frb)}(\beta^{(i)})^{-s}
+(-1)^n\sum_{\beta\in\lcl{\sigma}\cap(-z+\frb)}(\beta^{(i)})^{-s}\Biggr)
\Bigg|_{s=0}.\]
\end{prop}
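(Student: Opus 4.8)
The plan is to reinterpret the product formula \eqref{eq:X_i(C)} for $\log X_i(\frC)$ — which, after taking logarithmic derivatives of the multiple sine functions, is a sum of $\xi'(0,z_\sigma^{(i)},\sigma^{(i)})$ over $\sigma\in\Phi$ and $z_\sigma\in P_\sigma\cap(z+\frb)$ — as a pair of sums indexed by the upper and lower closures of the top-dimensional cones. Unwinding the definition of $\xi$, each term $\xi'(0,z_\sigma^{(i)},\sigma^{(i)})$ equals the $s$-derivative at $s=0$ of $-\zeta(s,z_\sigma^{(i)},\sigma^{(i)})+(-1)^{d(\sigma)}\zeta(s,\abs{\sigma}^{(i)}-z_\sigma^{(i)},\sigma^{(i)})$. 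So the first step is to expand $\log X_i(\frC)$ into two pieces, one coming from the $-\zeta(s,z_\sigma,\sigma)$ terms and one from the $(-1)^{d(\sigma)}\zeta(s,\abs{\sigma}-z_\sigma,\sigma)$ terms, and show that the first piece reassembles into the upper-closure sum $-\sum_{\sigma\in\Phi_n}\sum_{\beta\in\ucl{\sigma}\cap(z+\frb)}(\beta^{(i)})^{-s}$ and the second into the lower-closure sum.

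First I would handle the upper-closure piece. The lattice points counted by $\zeta(s,z_\sigma,\sigma)$ for $z_\sigma\in P_\sigma\cap(z+\frb)$ are exactly the points of $\sigma\cap(z+\frb)$, so $\sum_{z_\sigma}\zeta(s,z_\sigma,\sigma)=\sum_{\beta\in\sigma\cap(z+\frb)}(\beta^{(i)})^{-s}$. The task is then to pass from a sum over all $\sigma\in\Phi$ (of every dimension) to a sum over $\Phi_n$ with $\sigma$ replaced by its upper closure $\ucl{\sigma}$. By Proposition \ref{prop:Phi_n} we already know $\R^n_+=\coprod_{\sigma\in\Phi_n}\ucl{\sigma}$ modulo $E_\frf$; combined with the fact that $\tilde\Phi$ is a disjoint decomposition of $\R^n_+$, this lets me rewrite $\sum_{\sigma\in\Phi}f_\sigma$ against the stratification by upper closures. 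The clean way is to prove the characteristic-function identity $\sum_{\sigma\in\Phi}f_\sigma=\sum_{\sigma\in\Phi_n}f_{\ucl{\sigma}}$ on $\R^n_+$ modulo $E_\frf$, which amounts to showing that each lower-dimensional $\tau\in\Phi$ is counted with the correct multiplicity once we attribute it to the top-dimensional cones whose upper closure contains it — and this is precisely where Proposition \ref{prop:ULReciprocity} (with its alternating sum $\sum_{\tau\prec\rho\uprec\sigma}(-1)^{d(\rho)}$) must be invoked to bookkeep the signs. Because the Dirichlet series are absolutely convergent for $\re(s)>d/m$ and the decomposition $\tilde\Phi$ is locally finite, I can manipulate these sums termwise and then take $d/ds|_{s=0}$ after analytic continuation.

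Next I would treat the lower-closure piece, coming from the terms $(-1)^{d(\sigma)}\zeta(s,\abs{\sigma}^{(i)}-z_\sigma^{(i)},\sigma^{(i)})$. Here the combinatorics is genuinely the same as in Proposition \ref{prop:combi}: using the face structure I write each $z_\sigma\in P_\sigma$ uniquely as $z_\tau+\abs{\sigma}-\abs{\tau}$ with $z_\tau\in P_\tau^\circ$, apply Lemma \ref{lem:combi1} to regroup $\zeta(s,\abs{\tau}-z_\tau,\sigma)$ over faces, and then use the reciprocity of Proposition \ref{prop:ULReciprocity} to convert the alternating sum $\sum_{d(\sigma)}(-1)^{d(\sigma)}$ over cones containing a given $\rho$ into the sign $(-1)^n$ supported exactly on the lower faces. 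The bijection $z_\sigma\mapsto\overline{z_\sigma}$ sends $P_\sigma\cap(z+\frb)$ to $P_\sigma\cap(-z+\frb)$, which is what turns the offset $z$ into $-z$ and matches the second sum in the statement. The outcome is that the $(-1)^{d(\sigma)}$ terms collapse into $(-1)^n\sum_{\sigma\in\Phi_n}\sum_{\beta\in\lcl{\sigma}\cap(-z+\frb)}(\beta^{(i)})^{-s}$.

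The main obstacle will be the sign bookkeeping in the passage to closures: I must check that the alternating sums from Proposition \ref{prop:ULReciprocity} exactly reproduce the multiplicity-one counting of lattice points in $\ucl{\sigma}$ and $\lcl{\sigma}$, given that lower-dimensional faces are shared among several top-dimensional cones and that a lattice point $\beta$ lying on a face $\tau$ must be assigned to the unique top cone whose upper (resp.\ lower) closure contains it. Lemma \ref{lem:NonVertical} is the key geometric input ensuring no face of dimension $<n$ is vertical, so each such $\beta$ has a well-defined assignment and the closures genuinely partition $\R^n_+$; Proposition \ref{prop:subdivision} guarantees this counting is stable and additive. Once the two characteristic-function identities are established on $\R^n_+$ (modulo $E_\frf$), the proposition follows by substituting them into the two halves of $\log X_i(\frC)$, summing the corresponding Dirichlet series, and differentiating at $s=0$.
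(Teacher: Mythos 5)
Your outline follows the paper's route: split each $\xi'(0,z_\sigma^{(i)},\sigma^{(i)})$ into its two zeta halves, reassemble the unreflected half into sums over upper closures, and collapse the reflected, signed half into $(-1)^n$ times sums over lower closures of points of $-z+\frb$ via the face regrouping and Proposition \ref{prop:ULReciprocity}. However, there is a genuine gap in the step where you pass between cone decompositions. You propose to prove the identity $\sum_{\sigma\in\Phi}f_\sigma=\sum_{\sigma\in\Phi_n}f_{\ucl{\sigma}}$ only ``modulo $E_\frf$'' and then substitute it into each half of $\log X_i(\frC)$ separately. That substitution is not legitimate: the two sides are different fundamental domains for the $E_\frf$-action, and the corresponding Dirichlet series differ termwise by factors $(\eps^{(i)})^{-s}$. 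Since $\zeta(0,z_\sigma^{(i)},\sigma^{(i)})$ is in general nonzero, the derivative at $s=0$ of each half genuinely changes under such a replacement, by error terms of the form $\log(\eps^{(i)})\,\zeta(0,\cdot)$; so the per-half identity you want to substitute is false for a general $\Phi$ (already visible in the $n=2$ picture, where $\coprod_{\sigma\in\Phi}\sigma=\sigma\cup\tau$ but $\ucl{\sigma}=\sigma\cup\eps\tau$). What \emph{is} invariant under translating a cone by a unit is only the combined $\xi$-sum, because $\xi(0,\cdot)=0$ by \eqref{eq:xi(0)}; this is exactly the invariance established in Section \ref{subsec:InvarPrelim}, case (1). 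The paper uses that invariance once, at the very start of the proof, to normalize $\Phi=\{\tau\uprec\sigma\mid\sigma\in\Phi_n\}$ (possible by Proposition \ref{prop:Phi_n}); after this normalization every subsequent manipulation is an exact, termwise equality of Dirichlet series, valid for $\re(s)$ large and hence after continuation. Your argument needs this normalization step (or else an explicit verification that the $\log(\eps^{(i)})$ discrepancies produced in the two halves cancel against each other cone by cone), and without it the piecewise ``modulo $E_\frf$'' substitution breaks down.

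A second, smaller point: you invoke Proposition \ref{prop:ULReciprocity} to bookkeep the upper-closure half, but no alternating sum occurs there. Every cone contributes to that half with coefficient $-1$, independent of its dimension, so once $\Phi$ is normalized the unreflected half is literally the sum over $\coprod_{\sigma\in\Phi_n}\ucl{\sigma}=\coprod_{\sigma\in\Phi_n}\coprod_{\tau\uprec\sigma}\tau$; this is a multiplicity-one statement that is a tautology from the definition of the upper closure. The reciprocity proposition is needed only in the reflected half, where the signs $(-1)^{d(\sigma)}$ live, as you do correctly use it there.
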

\begin{proof}
Since $X_i(\frC)$ is invariant under the translations of cones in $\Phi$ 
by units, we may assume that 
$\Phi=\{\tau\uprec\sigma\mid\sigma\in\Phi_n\}$ 
by Proposition \ref{prop:Phi_n}. 
Then the definition of $X_i(\frC)$ becomes 
\begin{align*}
\log X_i(\frC)
=-\sum_{\sigma\in\Phi_n}\sum_{\tau\uprec\sigma}
\sum_{z_\tau\in P_\tau\cap(z+\frb)}
\Bigl\{&\zeta'\bigl(0,z_\tau^{(i)},\tau^{(i)}\bigr)\\
&+(-1)^{d(\tau)}\zeta'\bigl(0,(\abs{\tau}-z_\tau)^{(i)},\tau^{(i)}\bigr)
\Bigr\}. 
\end{align*}
For each $\sigma\in\Phi_n$, we have 
\[\sum_{\tau\uprec\sigma}\sum_{z_\tau\in P_\tau\cap(z+\frb)}
\zeta(s,z_\tau^{(i)},\tau^{(i)}\bigr)
=\sum_{\tau\uprec\sigma}\sum_{\beta\in \tau\cap(z+\frb)}(\beta^{(i)})^{-s}
=\sum_{\beta\in\ucl{\sigma}\cap(z+\frb)}(\beta^{(i)})^{-s}, \]
hence the first half is identical to the claimed form. 

On the other hand, for each $\rho\in\tilde{\Phi}$, we can deduce 
\begin{align*}
\sum_{z_\rho\in P_\rho\cap(z+\frb)}
\zeta\bigl(s,(\abs{\rho}-z_\rho)^{(i)},\rho^{(i)}\bigr)
&=\sum_{\beta\in\overline{\rho}\cap(-z+\frb)}(\beta^{(i)})^{-s}\\
&=\sum_{\tau\prec\rho}\sum_{z_\tau\in P_\tau\cap(-z+\frb)}
\zeta(s,z_\tau^{(i)},\tau^{(i)}) 
\end{align*}
from the definition (the usual closure $\overline{\rho}$ is 
the sum of all faces). 
Then, by Proposition \ref{prop:ULReciprocity}, we have 
\begin{align*}
&\sum_{\rho\uprec\sigma}\sum_{z_\rho\in P_\rho\cap(z+\frb)}
(-1)^{d(\rho)}\zeta\bigl(s,(\abs{\rho}-z_\rho)^{(i)},\rho^{(i)}\bigr)\\
&=\sum_{\rho\uprec\sigma}\sum_{\tau\prec\rho}
\sum_{z_\tau\in P_\tau\cap(-z+\frb)}
(-1)^{d(\rho)}\zeta(s,z_\tau^{(i)},\tau^{(i)})\\ 
&=\sum_{\tau\prec\sigma}
\Biggl(\sum_{\tau\prec\rho\uprec\sigma}(-1)^{d(\rho)}\Biggr)
\sum_{z_\tau\in P_\tau\cap(-z+\frb)}\zeta(s,z_\tau^{(i)},\tau^{(i)})\\
&=(-1)^n\sum_{\tau\lprec\sigma}
\sum_{z_\tau\in P_\tau\cap(-z+\frb)}\zeta(s,z_\tau^{(i)},\tau^{(i)}). 
\end{align*}
This means that the second half also satisfies the required equation. 
\end{proof}

\begin{thm}\label{thm:independence}
The value $X_i(\frC)$ is independent of the choices 
of $\fra_0$, $z$, $\Phi$ and $\gen\sigma$. 
\end{thm}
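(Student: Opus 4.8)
The plan is to reduce the theorem to the single hard case---invariance under subdivision of the cone decomposition $\Phi$---since the other three independences have already been disposed of in Section \ref{subsec:InvarPrelim}. Recall that there I showed $X_i(\frC)$ is unaffected by changing $\fra_0$, by scaling $z$ (via the homogeneity \eqref{eq:Sine_homogeneity}), by changing the generators $\gen\sigma$ (via the distribution relation \eqref{eq:DistRel}), and by translating a cone by a unit $\eps\in E_\frf$. Thus the entire content that remains is to verify that $X_i(\frC)$ does not change when some $n$-dimensional $\sigma\in\Phi$ is replaced by a finite rational subdivision $\Sigma$ with $\sigma=\coprod_{\tau\in\Sigma}\tau$. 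The essential new tool for this is Proposition \ref{prop:ULDecomposition}, which rewrites $\log X_i(\frC)$ intrinsically in terms of the upper and lower closures $\ucl{\sigma}$, $\lcl{\sigma}$ rather than the parallelotope data.

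First I would invoke Proposition \ref{prop:ULDecomposition} to express
\[\log X_i(\frC)=\sum_{\sigma\in\Phi_n}\frac{d}{ds}
\Biggl(-\sum_{\beta\in\ucl{\sigma}\cap(z+\frb)}(\beta^{(i)})^{-s}
+(-1)^n\sum_{\beta\in\lcl{\sigma}\cap(-z+\frb)}(\beta^{(i)})^{-s}\Biggr)
\Bigg|_{s=0},\]
which has the virtue that only the $n$-dimensional cones of $\Phi$ enter, and they do so solely through their upper and lower closures. The key point is that this formula is manifestly additive over any partition of the top-dimensional cones into regions whose upper and lower closures also partition. Precisely, when $\sigma$ is subdivided into $\Sigma$, Proposition \ref{prop:subdivision} gives
\[\ucl{\sigma}=\coprod_{\sigma'\in\Sigma_n}\ucl{\sigma'},\qquad
\lcl{\sigma}=\coprod_{\sigma'\in\Sigma_n}\lcl{\sigma'},\]
so the lattice points of $\ucl{\sigma}\cap(z+\frb)$ are partitioned exactly into those of the $\ucl{\sigma'}\cap(z+\frb)$, and likewise for the lower closures with $-z+\frb$. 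Substituting these disjoint-union decompositions into the sum from Proposition \ref{prop:ULDecomposition} shows term-by-term that the contribution of $\sigma$ equals the total contribution of the pieces $\sigma'\in\Sigma_n$, so $\log X_i(\frC)$ is unchanged.

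A small technical point to address is that Proposition \ref{prop:subdivision} decomposes only the top-dimensional pieces $\Sigma_n$, while a general subdivision may introduce lower-dimensional cones as well; but those do not appear in the formula of Proposition \ref{prop:ULDecomposition} at all, so they are harmless, and the same normalization used in proving that proposition (replacing $\Phi$ by $\{\tau\uprec\sigma\mid\sigma\in\Phi_n\}$ via Proposition \ref{prop:Phi_n}) applies equally to the subdivided decomposition. Finally, since any two rational cone decompositions of the type in Section 3.2 admit a common refinement reached from each by finitely many subdivisions and unit-translations, the invariance under these elementary moves propagates to full independence from $\Phi$. Combining this with the preliminary reductions completes the proof.

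I expect the main conceptual obstacle to have been the subdivision case---which is precisely why the upper/lower closure machinery was developed---but with Propositions \ref{prop:ULDecomposition} and \ref{prop:subdivision} in hand, the remaining argument is a direct and essentially formal bookkeeping of disjoint unions of lattice points, so the proof itself is short.
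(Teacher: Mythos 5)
Your proof is correct and follows essentially the same route as the paper: reduce to the subdivision case via the preliminary arguments of Section \ref{subsec:InvarPrelim}, then combine Proposition \ref{prop:ULDecomposition} (showing $X_i(\frC)$ depends only on the unions of upper and lower closures of the top-dimensional cones) with Proposition \ref{prop:subdivision} (showing those unions are unchanged by subdivision). The extra details you supply---the common-refinement reduction and the harmlessness of lower-dimensional pieces of the subdivision---merely spell out what the paper's terser proof leaves implicit.
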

\begin{proof}
As explained in Section \ref{subsec:InvarPrelim}, 
it is sufficient to prove that 
$X_i(\frC)$ is invariant under a subdivision of some $\sigma\in\Phi$. 
By Proposition \ref{prop:subdivision}, such a subdivision does not change 
the sets $\coprod_{\sigma\in\Phi_n}\ucl{\sigma}$ and 
$\coprod_{\sigma\in\Phi_n}\lcl{\sigma}$, 
and Proposition \ref{prop:ULDecomposition} tells that 
$X_i(\frC)$ depends only on these sets. 
This completes the proof.  
\end{proof}

Proposition \ref{prop:ULDecomposition} can be regarded as  
an expression of $X_i(\frC)$ as a product of $n$-ple sine functions. 

\begin{thm}\label{thm:Only_n-pleSine}
For each $\sigma\in\Phi_n$, $\gen\sigma=\{\omega_1,\ldots,\omega_n\}$ be 
the fixed set of generators and define $\Omega_\pm$ as in Proposition 
\ref{prop:UpperCriterion}. Moreover, define another parallelotope 
$P^u_\sigma$ by 
\[P^u_\sigma=\bigl\{x_1\omega_1+\cdots+x_n\omega_n\bigm|
x_j\in(0,1]\ (\omega_j\in\Omega_+),\, 
x_j\in[0,1)\ (\omega_j\in\Omega_-)\bigr\}. \]
Then we have 
\[X_i(\frC)=\prod_{\sigma\in\Phi_n}
\prod_{z_\sigma\in P^u_\sigma\cap(z+\frb)}
\Sine(z_\sigma^{(i)},\sigma^{(i)}). \]
\end{thm}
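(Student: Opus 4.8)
The plan is to deduce the statement directly from Proposition \ref{prop:ULDecomposition}, which already expresses $\log X_i(\frC)$ as a sum over $\sigma\in\Phi_n$ of derivatives involving the upper closure $\ucl{\sigma}\cap(z+\frb)$ (with sign $-1$) and the lower closure $\lcl{\sigma}\cap(-z+\frb)$ (with sign $(-1)^n$). The key observation is that the modified parallelotope $P^u_\sigma$ is designed precisely so that its translates by the sublattice generated by $\gen\sigma$ tile the upper closure $\ucl{\sigma}$, rather than the cone $\sigma$ itself. So the first step is to establish the tiling identity
\[\ucl{\sigma}=\coprod_{k_1,\ldots,k_n\geq 0}
\bigl(P^u_\sigma+k_1\omega_1+\cdots+k_n\omega_n\bigr), \]
and to verify that the half-open conditions defining $P^u_\sigma$ (closed on the $\Omega_+$ faces, open on the $\Omega_-$ faces) match the faces that belong to $\ucl{\sigma}$ under the criterion of Proposition \ref{prop:UpperCriterion}(ii).

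**Reduction to the upper half.** Granting this tiling, the single-place Shintani zeta sum over $\ucl{\sigma}\cap(z+\frb)$ decomposes, exactly as in \eqref{eq:zeta(s,z+b)}, as
\[\sum_{\beta\in\ucl{\sigma}\cap(z+\frb)}(\beta^{(i)})^{-s}
=\sum_{z_\sigma\in P^u_\sigma\cap(z+\frb)}\zeta(s,z_\sigma^{(i)},\sigma^{(i)}), \]
so that the first term in Proposition \ref{prop:ULDecomposition} contributes $-\sum_{z_\sigma\in P^u_\sigma\cap(z+\frb)}\zeta'(0,z_\sigma^{(i)},\sigma^{(i)})$. The task is then to show that the second term, built from $\lcl{\sigma}\cap(-z+\frb)$, combines with this to produce exactly $\log\Sine(z_\sigma^{(i)},\sigma^{(i)})$ summed over $P^u_\sigma\cap(z+\frb)$. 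Here I would use the involution $z_\sigma\mapsto\ucl{z_\sigma}=\abs{\sigma}-z_\sigma$ (for $z_\sigma$ in the interior) together with the fact that, since $\dim\sigma=n$, the sign $(-1)^{d(\sigma)}$ appearing in the definition of $\Sine$ agrees with the global sign $(-1)^n$. The point is that for a \emph{top-dimensional} cone, no face-collapsing cancellation remains: each $z_\sigma\in P^u_\sigma$ pairs with $\abs{\sigma}-z_\sigma$, and the map sending $P^u_\sigma\cap(z+\frb)$ to the relevant points of $(-z+\frb)$ is a bijection onto the lower-closure contribution, via the reflection $\beta\mapsto\abs{\sigma}-\beta$ which interchanges the $\Omega_+$ and $\Omega_-$ boundary conditions and hence carries $\ucl{\sigma}$ to $\lcl{\sigma}$.

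**Main obstacle.** The hard part will be the careful bookkeeping of the half-open boundaries: one must check that the reflection $\beta\mapsto\abs{\sigma}-\beta$ sends $P^u_\sigma$ bijectively onto the analogous parallelotope adapted to $\lcl{\sigma}$, respecting the congruence conditions modulo $\frb$ (using $z\notin\frb$ so that no $z_\sigma$ lands on a forbidden face), and that these boundary choices are exactly what makes the upper and lower contributions assemble into $\xi(s,z_\sigma^{(i)},\sigma^{(i)})=-\zeta(s,z_\sigma^{(i)},\sigma^{(i)})+(-1)^n\zeta(s,(\abs{\sigma}-z_\sigma)^{(i)},\sigma^{(i)})$ term by term. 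Once the boundary combinatorics is aligned, taking $\frac{d}{ds}$ at $s=0$ and exponentiating yields $\Sine(z_\sigma^{(i)},\sigma^{(i)})$ by its very definition, and summing over $\sigma\in\Phi_n$ and $z_\sigma\in P^u_\sigma\cap(z+\frb)$ gives the claimed product. I expect everything else to be a direct transcription of the tiling and the definition of the multiple sine function.
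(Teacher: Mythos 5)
Your proposal is correct and follows essentially the same route as the paper: Proposition \ref{prop:ULDecomposition} plus the tiling $\ucl{\sigma}=\coprod_{k_1,\ldots,k_n\geq 0}(P^u_\sigma+k_1\omega_1+\cdots+k_n\omega_n)$ (and its lower analogue with $P^l_\sigma$), followed by the reflection $z_\sigma\mapsto\abs{\sigma}-z_\sigma$, which pairs the two contributions into $\xi(s,z_\sigma^{(i)},\sigma^{(i)})$ term by term because $d(\sigma)=n$. One small correction of phrasing: that reflection is a bijection of $P^u_\sigma$ onto $P^l_\sigma$ (and of $P^u_\sigma\cap(z+\frb)$ onto $P^l_\sigma\cap(-z+\frb)$, since $\abs{\sigma}\in\frb$), not a map carrying the unbounded set $\ucl{\sigma}$ onto $\lcl{\sigma}$.
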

\begin{proof}
By Proposition \ref{prop:UpperCriterion}, we have 
\[\ucl{\sigma}=\coprod_{k_1,\ldots,k_n=0}^\infty
(P^u_\sigma+k_1\omega_1+\cdots+k_n\omega_n). \]
If we replace $P^u_\sigma$ by $P^l_\sigma$ which is similarly defined, 
\[\lcl{\sigma}=\coprod_{k_1,\ldots,k_n=0}^\infty
(P^l_\sigma+k_1\omega_1+\cdots+k_n\omega_n). \]
Thus the claim follows from Proposition \ref{prop:ULDecomposition} 
and the fact that $z_\sigma\mapsto\abs{\sigma}-z_\sigma$ is 
a bijection from $P^u_\sigma$ onto $P^l_\sigma$. 
\end{proof}

\section{Relation of $X_i(\frC)$ and $X_i(\mu_j\frC)$}
This section is devoted to the proof of the following formula: 

\begin{thm}\label{thm:relation}
The invariants $X_i(\frC)$ satisfies that 
\[X_i(\mu_j\frC)=\begin{cases}
X_i(\frC)& (i=j),\\ X_i(\frC)^{-1}& (i\ne j).\end{cases}\]
\end{thm}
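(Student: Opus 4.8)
The plan is to compare $X_i(\frC)$, which is attached to a fan of the totally positive orthant $\R^n_+$, with $X_i(\mu_j\frC)$, which naturally lives on the neighbouring orthant $O_j=\{x\mid x^{(k)}>0\ (k\ne j),\ x^{(j)}<0\}$. First I would use Theorem \ref{thm:independence} to present $X_i(\mu_j\frC)$ via the representative $\mu_j\fra_0$, for which the coset becomes $\mu_j(z+\frb)$ on $\R^n_+$ (here one uses $\mu_j\equiv 1\bmod\frf$). Pulling everything back by $\beta\mapsto\mu_j\beta$, which carries $\R^n_+$ onto $O_j$ and $\mu_j(z+\frb)$ onto $z+\frb$, and removing the totally positive factors $\abs{\mu_j^{(k)}}$ by the exact homogeneity \eqref{eq:Sine_homogeneity} of the multiple sine function, I obtain
\[
X_i(\mu_j\frC)=\prod_{\sigma'}\ \prod_{z_{\sigma'}\in P_{\sigma'}\cap(z+\frb)}\Sine\bigl(\eta\,z_{\sigma'}^{(i)},\,\eta\,(\sigma')^{(i)}\bigr),
\]
where $\sigma'$ now runs over a fan of $O_j$ with generators in $\frb$, the coset is again $z+\frb$, and $\eta=+1$ if $i\ne j$ while $\eta=-1$ reverses the (negative) $j$-th coordinate when $i=j$ so that all arguments stay positive. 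Thus the whole problem reduces to comparing the \emph{same} sine product over $\R^n_+$ and over $O_j$, two orthants sharing the coset $z+\frb$ and differing only in the sign of the $j$-th coordinate.

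To perform this comparison I would feed both products into the upper/lower closure machinery of Section 4 with the height direction chosen as $h=j$, applying Proposition \ref{prop:ULDecomposition} (and its evident $O_j$-analogue) on each side. This rewrites $\log X_i(\frC)$ and $\log X_i(\mu_j\frC)$ as regularized sums of $(\beta^{(i)})^{-s}$, differentiated at $s=0$, over the upper and lower closures $\ucl{\sigma}$ and $\lcl{\sigma}$ of the top-dimensional cones against $z+\frb$ and $-z+\frb$. The argument then runs parallel to the proof of Proposition \ref{eq:X_i(muC)}, with two substitutions: the plain Euler-characteristic count (Lemma \ref{lem:combi2}) is replaced by its directed refinement $\sum_{\tau\prec\rho\uprec\sigma}(-1)^{d(\rho)}$ of Proposition \ref{prop:ULReciprocity}, and the involution $z_\sigma\mapsto\abs{\sigma}-z_\sigma$ is applied only in the $j$-th direction. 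The geometric point that makes this work is that $\R^n_+$ and $O_j$ are exactly the two halves $x^{(j)}\gtrless 0$ of the slab $\{x^{(k)}>0\ (k\ne j)\}$: fixing a single $E_\frf$-invariant fan of this slab that refines the wall $\{x^{(j)}=0\}$, the lower faces of its cones in $\R^n_+$ are glued to the upper faces of its cones in $O_j$ precisely along that wall, and Proposition \ref{prop:ULReciprocity} is what transfers the contribution from one orthant to the other.

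The sign is produced by the one-variable reflection relation for the multiple sine function, $\Sine_d(\abs{\omega}-w,\omega)=\Sine_d(w,\omega)^{(-1)^{d-1}}$, which follows from $B_l(1-x)=(-1)^lB_l(x)$ exactly as \eqref{eq:xi(0)} and \eqref{eq:xi'(0)} were obtained. When $i\ne j$ the reflection at place $j$ leaves the observed $i$-th coordinate untouched, and the directed reciprocity converts the $z+\frb$-sum into a $-z+\frb$-sum with a global inversion, yielding $X_i(\mu_j\frC)=X_i(\frC)^{-1}$. When $i=j$ the reversal $\eta=-1$ of the observed coordinate contributes one further factor $-1$, which cancels the inversion and gives $X_i(\mu_j\frC)=X_i(\frC)$. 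As a consistency check, multiplying the relation over $j=1,\dots,n$ inverts $X_i$ exactly $n-1$ times (all $j\ne i$), recovering $X_i(\mu\frC)=X_i(\frC)^{(-1)^{n-1}}$ of Proposition \ref{eq:X_i(muC)}.

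I expect the main obstacle to be precisely the directed bookkeeping of the second step: constructing the common $E_\frf$-invariant fan of the slab that refines $\{x^{(j)}=0\}$ into a subfan, checking that the sign flip at place $j$ interchanges the roles of $\ucl{\sigma}$ and $\lcl{\sigma}$ across the wall, and verifying that the lower-dimensional contributions on $\{x^{(j)}=0\}$ cancel after Proposition \ref{prop:ULReciprocity} is applied. One must also confirm that the relevant values at $s=0$ vanish, so that no stray $\log\abs{\mu_j^{(i)}}$ term survives the differentiation and the clean exponents $\pm1$ emerge; this is the analogue, in the directed setting, of the identity $\xi_\frf(0,z+\frb)=0$ used in the proof of Theorem \ref{thm:factorization}.
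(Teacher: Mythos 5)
Your opening reduction (transporting $X_i(\mu_j\frC)$, via the representative $\mu_j\fra_0$ and the homogeneity \eqref{eq:Sine_homogeneity}, to a sine product over cones in the neighbouring orthant $O_j=\mu_j\R^n_+$ with the same coset $z+\frb$) is sound and agrees with the paper's own preliminary step. But the core mechanism you propose afterwards cannot be carried out. You want ``a single $E_\frf$-invariant fan of the slab that refines the wall $\{x^{(j)}=0\}$ into a subfan,'' along which lower faces on one side glue to upper faces on the other. No such fan exists: every nonzero element of $F$ has all coordinates nonzero, so the hyperplane $\{x^{(j)}=0\}$ contains no nonzero rational point and hence no rational cone whatsoever (if a rational cone $\tau$ lay in the wall, the point $\abs{\tau}\in F\setminus\{0\}$ would lie on the wall). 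Thus no rational fan can have the wall as a union of faces, and rationality is not negotiable here — the whole apparatus (generators in $\frb$, Lemma \ref{lem:NonVertical}, Proposition \ref{prop:UpperCriterion}) depends on it. What actually happens geometrically is that the $E_\frf$-translates of the cones of $\Phi_n$ and of $\mu_j\Phi_n$ accumulate toward the wall from both sides, so local finiteness fails there and the comparison of the two orthants is an intrinsically infinite, regularized-analytic problem. It cannot ``run parallel to the proof of Proposition \ref{eq:X_i(muC)},'' which is a finite rearrangement inside one fan; and Proposition \ref{prop:ULReciprocity} is a statement about the faces of a single $n$-dimensional cone, with no mechanism for transferring contributions between cones lying in different orthants.

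This accumulation at the wall is exactly the difficulty the paper's Section 5 is designed to overcome, and its solution is analytic rather than combinatorial: one shows that $A(s)=\sum_{\sigma\in\Phi_n\cup(\mu_j\Phi_n)}\sum_{z_\sigma}\xi(s,z_\sigma^{(i)},\sigma^{(i)})$ has a \emph{double} zero at $s=0$ by summing over the unit group (which produces the factor $\prod_m\bigl(1-(\eps_m^{(i)})^{-s}\bigr)^{-1}$), using the cocycle and prism identities (Corollary \ref{cor:prism_cone}) to merge the infinite family of cones across the wall — in the quadratic case the whole family sums to the single wall-crossing cone $\R_+1+\R_+\mu_j$ — and then bounding pole orders, with Lemma \ref{lem:sign(sigma,tau)} supplying the cancellation that reduces $(n-1)$-fold unit sums to $(n-2)$-fold ones. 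None of this appears in your proposal, so the missing piece is not ``directed bookkeeping'' but the central analytic idea. Two further flaws: your treatment of the case $i=j$ via the factor $\eta=-1$ is unjustified, since \eqref{eq:Sine_homogeneity} holds only for totally positive $\lambda$ (negating the data makes the terms $N(\beta)^{-s}$ of the defining series ill-defined); the paper instead deduces $i=j$ from the cases $i\ne j$ combined with Proposition \ref{eq:X_i(muC)}. And your choice $h=j$ runs against the grain of the construction: the paper takes $h\ne j$ precisely so that multiplication by $\mu_j$ preserves upper and lower closures. Your reflection formula $\Sine_d(\abs{\omega}-w,\omega)=\Sine_d(w,\omega)^{(-1)^{d-1}}$ is correct, but it is not where the difficulty lies.
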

Here $\mu_j$ denotes an element of $1+\frf$ 
such that $\mu_j^{(j)}<0$ and $\mu_j^{(i)}>0$ for $i\ne j$. 

\subsection{Preliminary arguments}\label{subsec:RelPrelim}
First, notice that it suffices to consider only the case $i\ne j$. 
In fact, since the product $\mu=\mu_1\cdots\mu_n$ is 
a totally negative element of $1+\frf$, 
the claimed relations for $i\ne j$ combined with 
Proposition \ref{eq:X_i(muC)} lead to 
\[X_i(\mu_i\frC)=X_i(\mu\frC)^{(-1)^{n-1}}=X_i(\frC). \]
Therefore, from now on, 
we fix mutually distinct indices $i$ and $j$ in $\{1,\ldots,n\}$. 
Since we will use the upper and lower closures again, 
we also fix $h\in\{1,\ldots,n\}$. 
We assume that $h\ne j$, so that the multiplication by $\mu_j$ 
preserves the upper and lower closures. 

Recall that we used the data $\fra_0\in\frC$, $z\in F_+$ and 
$\frb=z\fra_0^{-1}\frf$ to write down 
\[X_i(\frC)=\prod_{\sigma\in\Phi_n}
\prod_{z_\sigma\in P^u_\sigma\cap(z+\frb)}
\Sine(z_\sigma^{(i)},\sigma^{(i)}). \]
Here we use Theorem \ref{thm:Only_n-pleSine} 
instead of the original definition. 
For the class $\mu_j\frC$, we may use the data 
$\mu_j\fra_0$, $z$ and $z(\mu_j\fra_0)^{-1}\frf=\mu_j^{-1}\frb$ 
to obtain 
\[X_i(\mu_j\frC)=\prod_{\sigma\in\Phi_n}
\prod_{z_\sigma\in P^u_\sigma\cap(z+\mu_j^{-1}\frb)}
\Sine(z_\sigma^{(i)},\sigma^{(i)}). \]
Now the assumption $i\ne j$ and the homogeneity of multiple sine functions 
allow the multiplication by the \emph{positive} number $\mu_j^{(i)}$: 
\begin{align*}
X_i(\mu_j\frC)&=\prod_{\sigma\in\Phi_n}
\prod_{z_\sigma\in P^u_\sigma\cap(z+\mu_j^{-1}\frb)}
\Sine(\mu_j^{(i)}z_\sigma^{(i)},\mu_j^{(i)}\sigma^{(i)})\\
&=\prod_{\sigma\in\mu_j\Phi_n}
\prod_{z_\sigma\in P^u_\sigma\cap(\mu_jz+\frb)}
\Sine(z_\sigma^{(i)},\sigma^{(i)})\\
&=\prod_{\sigma\in\mu_j\Phi_n}
\prod_{z_\sigma\in P^u_\sigma\cap(z+\frb)}
\Sine(z_\sigma^{(i)},\sigma^{(i)}). 
\end{align*}
The last equality follows from $\mu_jz+\frb=z+(\mu_j-1)z+\frb=z+\frb$. 
Thus we have to show that the sum
\begin{equation}\label{eq:VanishSum}
\log X_i(\frC)+\log X_i(\mu_j\frC)
=\sum_{\sigma\in\Phi_n\cup(\mu_j\Phi_n)}
\sum_{z_\sigma\in P^u_\sigma\cap(z+\frb)}
\xi'(0,z_\sigma^{(i)},\sigma^{(i)})
\end{equation}
vanishes. 

\subsection{The quadratic case}\label{subsec:n=2}
Here, we deal with the special case of $n=2$ 
to illustrate the idea of our proof. 
In this case, Theorem \ref{thm:relation} was proved in \cite{Yamamoto08} 
by using the continued fraction theory. 
The following method gives another, much simpler proof. 
The reader who is interested only in the general case may skip to 
\ref{subsec:rel_n-dim_cones}. 

Since the unit group $E_\frf$ is of rank $n-1=1$, 
there is a unique generator $\eps$ of $E_\frf$ such that $\eps^{(i)}>1$. 
As a rational cone decomposition, we may take $\Phi=\{\sigma,\tau\}$ 
where $\sigma=\R_+ 1+\R_+\eps$ and $\tau=\R_+1$. 
We also put $\sigma'=\mu_j\sigma$ and $\tau'=\mu_j\tau$ 
(see Figure \ref{fig:n=2}). 
Then the upper and lower closures of $\sigma$ are given by 
$\ucl{\sigma}=\sigma\cup\eps\tau$ and $\lcl{\sigma}=\sigma\cup\tau$ 
(when $n=2$, the conditions $i\ne j$ and $h\ne j$ implies $h=i$). 

\begin{figure}[htb]\caption{Cones in $\eps^k\Phi$ and $\eps^k\mu_j\Phi$}
\label{fig:n=2}
\begin{center}
\setlength{\unitlength}{3pt}
\begin{picture}(120,90)(-60,-10)
\put(-50,0){\vector(1,0){100}} \put(51,-1){$x_j$}
\put(0,-10){\vector(0,1){ 83}} \put(-1,74){$x_i$}
\put(0,0){\line(1,1){40}} \put(41,41){$\tau$}
\put(0,0){\line(1,2){30}} \put(30,61){$\eps\tau$}
\put(0,0){\line(1,4){17}} \put(16,69){$\eps^2\tau$}
\put(20,20){\circle*{1}} \put(21,18){$1$}
\put(14,28){\circle*{1}} \put(15,26){$\eps$}
\put(10,40){\circle*{1}} \put(11,38){$\eps^2$}
\put(32,45){$\sigma$}
\put(20,60){$\eps\sigma$}
\put(10,60){\circle*{0.5}}
\put( 8,63){\circle*{0.5}}
\put( 6,66){\circle*{0.5}}

\put(0,0){\line(-3,2){45}} \put(-49,30){$\tau'$}
\put(0,0){\line(-3,4){40}} \put(-43,54){$\eps\tau'$}
\put(0,0){\line(-1,3){23}} \put(-25,70){$\eps^2\tau'$}
\put(-24,16  ){\circle*{1}} \put(-23,17){$\mu_j$}
\put(-17,22.7){\circle*{1}} \put(-16,24){$\eps\mu_j$}
\put(-12,36  ){\circle*{1}} \put(-11,37){$\eps^2\mu_j$}
\put(-40,37){$\sigma'$}
\put(-30,53){$\eps\sigma'$}
\put(-10,60){\circle*{0.5}}
\put( -8,63){\circle*{0.5}}
\put( -6,66){\circle*{0.5}}
\end{picture}
\end{center}
\end{figure}

By \eqref{eq:VanishSum}, we have to show that the function 
\[A(s)=\sum_{z_\sigma\in P_\sigma^u\cap(z+\frb)}
\xi\bigl(s,z_\sigma^{(i)},\sigma^{(i)}\bigr)
+\sum_{z_{\sigma'}\in P_{\sigma'}^u\cap(z+\frb)}
\xi\bigl(s,z_{\sigma'}^{(i)},\sigma^{\prime(i)}\bigr)\]
has a zero at $s=0$ of order greater than or equal to $2$ 
(one has $A(0)=0$ by \eqref{eq:xi(0)}). 
If $\re(s)$ is large, this function is given by 
\[A(s)=-\sum_{\beta\in(\ucl{\sigma}\cup\ucl{\sigma'})\cap(z+\frb)}
(\beta^{(i)})^{-s}
+(-1)^n\sum_{\beta\in(\lcl{\sigma}\cup\lcl{\sigma'})\cap(-z+\frb)}
(\beta^{(i)})^{-s}. \]

Now we consider similar functions for $\eps^k\sigma$ and $\eps^k\sigma'$ 
($k=0,1,2,\ldots$), and sum up them: 
\[B(s)=\sum_{k=0}^\infty\Biggl\{
-\sum_{\beta\in(\ucl{\eps^k\sigma}\cup\ucl{\eps^k\sigma'})\cap(z+\frb)}
(\beta^{(i)})^{-s}
+(-1)^n\sum_{\beta\in(\lcl{\eps^k\sigma}\cup\lcl{\eps^k\sigma'})\cap(-z+\frb)}
(\beta^{(i)})^{-s}\Biggr\}. \]
It is equivalent to sum up the terms $\bigl((\eps^k\beta)^{(i)}\bigr)^{-s}$ 
for each $\beta\in(\overline{\sigma}^*\cup\overline{\sigma'}^*)\cap(z+\frb)$ 
($*=\mathit{u}$ or $\mathit{l}$), hence one has 
\[B(s)=\frac{1}{1-(\eps^{(i)})^{-s}}A(s). \]
On the other hand, $B(s)$ is the sum over the set 
\begin{equation}\label{eq:cones_n=2}
\bigcup_{k=0}^\infty
(\overline{\eps^k\sigma}^*\cup\overline{\eps^k\sigma'}^*)\cap(z+\frb)
=\overline{(\R_+1+\R_+\mu_j)}^*\cap(z+\frb). 
\end{equation}
If we put $\rho=\R_+1+\R_+\mu_j$, then we have 
\[B(s)=\sum_{z_\rho\in P_\rho^u\cap(z+\frb)}\xi(s,z_\rho^{(i)},\rho^{(i)}). \]
Note that, though $\rho$ is not contained in $\R^n_+$, 
the $\xi$-functions in the right hand side are well-defined 
since the $i$-th projection $\rho^{(i)}$ is positive. 
Thus the function $B(s)$ is a finite sum of $\xi$-functions, 
whose order of zero at $s=0$ is at least $1$ by \eqref{eq:xi(0)}. 
Hence the order of $A(s)=\bigl(1-(\eps^{(i)})^{-s}\bigr)B(s)$ is 
at least $2$, as desired. 
This completes the proof of Theorem \ref{thm:relation} for $n=2$. 

\subsection{Relations for $n$-dimensional cones}\label{subsec:rel_n-dim_cones}
Let us return to the case of general degree $n$. 
In the sequel, all cones we consider are contained in 
the upper half space 
\[\Half:=\{x\in\R^n\mid x^{(h)}>0\}. \]

A key point of the proof for $n=2$ given in \ref{subsec:n=2} is 
the relation \eqref{eq:cones_n=2}, 
which `sums up' an infinite series of cones to a single cone. 
Such a relation will be generalized to higher dimensions, 
as a relation between `$(n-1)$-fold series' and 
`$(n-2)$-fold series' of cones. 
At first, however, we have to prepare some formulas for finite sums of cones. 

For an $n$-tuple $\underline{\omega}=(\omega_1,\ldots,\omega_n)$ 
of vectors in $\Half$, we define the function 
$\chi(\underline{\omega})$ on $\Half$ as follows: 
if $x\in\Half$ can be written as 
$x=a_1\omega_1+\cdots+a_n\omega_n$ for some $a_1,\ldots,a_n>0$, 
we put 
\[\chi(\underline{\omega})(x)
=\sign\det(\omega_1,\ldots,\omega_n), \]
and set $\chi(\underline{\omega})(x)=0$ otherwise. 
Thus, if $\omega_1,\ldots,\omega_n$ are linearly independent, 
$\chi(\underline{\omega})$ is the characteristic function 
(with a sign) of the cone generated by $\underline{\omega}$, 
while $\chi(\underline{\omega})=0$ identically 
in the linearly dependent case. 

A fundamental property of $\chi$ is the `cocycle relation'. 
To state precisely, we need a definition: 
We say that $x\in\Half$ is \emph{generic} 
with respect to a subset $\Omega$ of $\Half$ 
if $x$ does not lie on any cone generated by 
$n-1$ or less elements of $\Omega$. 

\begin{prop}
Let $\omega_0,\ldots,\omega_n\in\Half$ be $n+1$ vectors, 
and assume that $x\in\Half$ is generic 
with respect to $\{\omega_0,\ldots,\omega_n\}$. 
Then we have 
\begin{equation}\label{eq:cocycle}
\sum_{l=0}^n(-1)^l
\chi(\omega_0,\ldots,\check{\omega}_l,\ldots,\omega_n)(x)=0, 
\end{equation}
where $\check{\omega}_l$ means that $\omega_l$ is deleted. 
\end{prop}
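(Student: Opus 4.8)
The plan is to prove the cocycle relation \eqref{eq:cocycle} by reducing it to a statement about signed characteristic functions and then evaluating both sides pointwise at the given generic $x$. Since $\chi(\underline{\omega})(x)$ depends only on whether $x$ lies in the open cone generated by the $\omega$'s (together with a sign coming from the orientation $\sign\det$), and since the genericity hypothesis guarantees that $x$ never lies on any lower-dimensional cone, every term in the sum is evaluated away from the boundary walls. This means each nonzero term equals $\pm 1$, and the whole identity becomes a purely combinatorial/geometric count. The natural first step is therefore to fix the point $x$ and translate the statement into: the alternating sum of the orientations of those simplicial cones $\langle\omega_0,\ldots,\check{\omega}_l,\ldots,\omega_n\rangle$ that actually contain $x$ equals zero.

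First I would dispose of the degenerate cases. If the $n+1$ vectors $\omega_0,\ldots,\omega_n$ lie in a hyperplane, or more generally if fewer than $n$ of any chosen subfamily are linearly independent, the corresponding $\chi$ vanishes identically, so I would reduce to the case where $\omega_0,\ldots,\omega_n$ span $\R^n$ and are in sufficiently general position. The cleanest route is to write $x$ itself in barycentric-type coordinates: express $x=\sum_{l=0}^n c_l\omega_l$ using one redundant relation among the $n+1$ vectors, say $\sum_l\lambda_l\omega_l=0$ with not all $\lambda_l$ zero. Genericity of $x$ forces all the relevant coefficients to be nonzero, so signs are well defined. Then the condition that $x$ lies in the cone $\langle\omega_0,\ldots,\check\omega_l,\ldots,\omega_n\rangle$ becomes an explicit sign condition on the coefficients obtained by eliminating $\omega_l$ via the linear relation, and $\sign\det(\ldots,\check\omega_l,\ldots)$ differs from a fixed reference orientation by the sign of $\lambda_l$. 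Carrying out this bookkeeping, the alternating sum in \eqref{eq:cocycle} collapses, because each $\omega_l$ that can be eliminated contributes a cancelling pair of terms of opposite sign.

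The key steps, in order, are: (1) introduce the unique-up-to-scale linear dependence $\sum_l\lambda_l\omega_l=0$ among the $n+1$ vectors; (2) use Cramer's rule to relate the various $\det(\omega_0,\ldots,\check\omega_l,\ldots,\omega_n)$ to each other through the $\lambda_l$, picking up exactly the signs $(-1)^l$ appearing in the sum; (3) translate ``$x$ lies in the $l$-th cone'' into the simultaneous positivity of the coefficients of $x$ after eliminating $\omega_l$; and (4) verify that the signed sum of these indicator contributions is identically zero. I expect step (3)--(4) to be the main obstacle: one must check carefully that, for a generic $x$, the set of indices $l$ for which $x$ falls in the $l$-th cone, weighted by $(-1)^l\,\sign\det$, always cancels. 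The cleanest way to handle this is to recognize the left-hand side as the boundary of the simplex on vertices $\omega_0,\ldots,\omega_n$ evaluated against $x$: the relation \eqref{eq:cocycle} is exactly the statement that the signed cones $\langle\omega_0,\ldots,\check\omega_l,\ldots,\omega_n\rangle$ form the (oriented) boundary faces of the full configuration, so their signed indicators sum to a function with no interior jump at a generic interior point. Framing it this way reduces the verification to the additivity of $\chi$ under subdivision, which follows from the definition of $\sign\det$ and the fact that $x$ avoids all walls.
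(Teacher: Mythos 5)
Your plan never actually uses the hypothesis that $\omega_0,\ldots,\omega_n$ all lie in the half space $\Half$, and without that hypothesis the statement is false: already for $n=1$, taking $\omega_0=1$, $\omega_1=-1$, $x=1$ gives $\chi(\omega_1)(x)-\chi(\omega_0)(x)=0-1=-1$. (Geometrically, the identity fails whenever the convex hull of the $\omega_l$ contains the origin.) This is not a presentational omission; it is the crux of the proof. Moreover, the cancellation mechanism you propose in step (4) is misstated: it is not true that ``each $\omega_l$ that can be eliminated contributes a cancelling pair of terms of opposite sign'' --- each index $l$ contributes at most one term to the sum. The correct mechanism in your algebraic setup is global. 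Write $x=\sum_l c_l\omega_l$ and let $t$ parametrize the line of all representations, with coefficients $c_l+t\lambda_l$; by genericity of $x$, the indices contributing nonzero terms are exactly those $l$ at which the open interval $I=\{t\mid c_k+t\lambda_k>0\ \text{for all }k\}$ has a \emph{finite} endpoint where the $l$-th coefficient vanishes, and by your Cramer relation $(-1)^l\sign\det(\omega_0,\ldots,\check{\omega}_l,\ldots,\omega_n)=\pm\sign(\lambda_l)$ the left endpoint contributes $+1$ and the right endpoint $-1$ (up to one overall sign). So the sum vanishes precisely when $I$ is empty or bounded, and boundedness is exactly where $\Half$ must enter: if all nonzero $\lambda_l$ had the same sign, evaluating the $h$-th coordinate of $\sum_l\lambda_l\omega_l=0$ with all $\omega_l^{(h)}>0$ gives a contradiction, so the dependence has coefficients of both signs and $I$ cannot be a half-line. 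Without this observation your step (4) cannot be completed.

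The same gap infects your fallback geometric framing: ``the signed indicators sum to a function with no interior jump at a generic point'' is not sufficient to conclude that the sum is zero (a nonzero constant also has no jumps). The paper's proof supplies precisely the missing point: after a small perturbation of the $\omega_l$ (legitimate because genericity of $x$ makes each term locally constant in the $\omega_l$), the left-hand side is the signed intersection number of the ray $\R_+x$ with the boundary of the simplex spanned by $\omega_0,\ldots,\omega_n$, and this vanishes \emph{because the simplex lies in $\Half$ and therefore does not contain the origin}, so the ray begins and ends outside the simplex. If you incorporate the mixed-sign observation above (or, in the geometric version, the origin-avoidance), your algebraic route does go through and is a genuine alternative to the paper's intersection-number argument; as written, however, the decisive idea is absent.
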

\begin{proof}
First note that each value 
$\chi(\omega_0,\ldots,\check{\omega}_l,\ldots,\omega_n)(x)$ 
is invariant when we move $\omega_0,\ldots,\omega_n$ slightly, 
by genericity of $x$. 
Hence we can assume that the points $\omega_0,\ldots,\omega_n\in\R^n$ span 
an $n$-dimensional simplex in $\Half$. 
Then the left hand side of \eqref{eq:cocycle} is exactly 
the intersection number of the boundary of that simplex and the ray $\R_+x$, 
counted with appropriate sign, 
which is zero since the simplex lies in the half space $\Half$ and 
does not contain the origin. 
\end{proof}

Next, we prove a formula (the `prism decomposition') 
for the difference of two functions 
$\chi(\underline{\omega})$ and $\chi(\underline{\eta})$. 
Here we need a definition again: 
For two $(n-1)$-tuples 
$\underline{\omega}=(\omega_1,\ldots,\omega_{n-1})$ and 
$\underline{\eta}=(\eta_1,\ldots,\eta_{n-1})$ of vectors in $\Half$, 
we put 
\[\pi(\underline{\omega},\underline{\eta})
:=\sum_{k=1}^{n-1}(-1)^k
\chi(\omega_1,\ldots,\omega_k,\eta_k,\ldots,\eta_{n-1}). \]

\begin{prop}
Let $\underline{\omega}=(\omega_1,\ldots,\omega_n)$ 
and $\underline{\eta}=(\eta_1,\ldots,\eta_n)$ be 
two $n$-tuples of vectors in $\Half$, 
and $x\in\Half$ be generic with respect to these vectors. 
Then we have 
\begin{equation}\label{eq:prism}
\chi(\underline{\omega})(x)-\chi(\underline{\eta})(x)
=\sum_{l=1}^n(-1)^{l+1}
\pi\bigl(\underline{\omega}[l],\underline{\eta}[l]\bigr)(x). 
\end{equation}
Here $\underline{\omega}[l]$ and $\underline{\eta}[l]$ denote 
$(n-1)$-tuples obtained by deleting $\omega_l$ and $\eta_l$, respectively. 
\end{prop}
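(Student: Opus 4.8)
The plan is to read \eqref{eq:prism} as the simplicial prism decomposition and to deduce it from the cocycle relation \eqref{eq:cocycle}, which will play exactly the role that $\partial^2=0$ plays in the homological version of the prism argument. Geometrically, the cones $\chi(\underline{\omega})$ and $\chi(\underline{\eta})$ are the two ends of a prism whose standard triangulation consists of the $n$ pieces
\[
T_k=(\omega_1,\ldots,\omega_k,\eta_k,\ldots,\eta_n)\qquad(k=1,\ldots,n),
\]
each an $(n+1)$-tuple of vectors in $\Half$. Since $x$ is generic with respect to the full family $\{\omega_1,\ldots,\omega_n,\eta_1,\ldots,\eta_n\}$, it is in particular generic with respect to the $n+1$ vertices of each $T_k$, so the cocycle relation \eqref{eq:cocycle} applies to $T_k$ and states that the signed sum, over the $n+1$ ways of deleting one vertex, of the $\chi$-values of the resulting $n$-tuples vanishes at $x$.

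First I would form the alternating combination $\sum_{k=1}^n(-1)^{k-1}$ of these $n$ identities and sort the resulting facets by which vertex of $T_k$ is deleted. The two junction vertices of $T_k$ are $\omega_k$ and $\eta_k$. Deleting $\omega_k$ from $T_k$ (for $k\geq2$) produces the same tuple as deleting $\eta_{k-1}$ from $T_{k-1}$; these interior facets occur twice and, once the combining signs are accounted for, cancel in pairs. The two extreme junction deletions are special: deleting $\omega_1$ from $T_1$ gives $\underline{\eta}$ and deleting $\eta_n$ from $T_n$ gives $\underline{\omega}$, so these end facets reproduce the left-hand side $\chi(\underline{\omega})(x)-\chi(\underline{\eta})(x)$. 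Every remaining deletion removes a non-junction vertex, either $\omega_l$ with $l<k$ or $\eta_l$ with $l>k$, leaving a tuple built over the face with index $l$ deleted; grouping these, for each fixed $l$, over the admissible $k$ (those $k>l$ giving the $\omega_l$-deletions and those $k<l$ giving the $\eta_l$-deletions) reassembles exactly $\pi(\underline{\omega}[l],\underline{\eta}[l])$.

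The main obstacle will be the sign and index bookkeeping: I must verify that the orientation signs $(-1)^m$ attached by \eqref{eq:cocycle} to the deletion of the vertex in position $m$ of $T_k$, multiplied by the combining signs $(-1)^{k-1}$, genuinely cancel the interior facets and genuinely reconstitute the side facets with the signs $(-1)^k$ and the outer sign $(-1)^{l+1}$ demanded by \eqref{eq:prism} and by the definition of $\pi$. I would manage this by labelling each facet by the pair consisting of the deleted vertex and its position in $T_k$, and it is reassuring to check the mechanism first for $n=2$: there $T_1=(\omega_1,\eta_1,\eta_2)$ and $T_2=(\omega_1,\omega_2,\eta_2)$ share the single interior facet $(\omega_1,\eta_2)$, and subtracting their two cocycle relations at once gives $\chi(\omega_1,\omega_2)-\chi(\eta_1,\eta_2)=\chi(\omega_1,\eta_1)-\chi(\omega_2,\eta_2)$, which is \eqref{eq:prism} in this case.
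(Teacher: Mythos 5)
Your proposal is correct and takes essentially the same route as the paper: the paper's proof uses exactly your prism triangulation $T_k=(\omega_1,\ldots,\omega_k,\eta_k,\ldots,\eta_n)$, applies the cocycle relation \eqref{eq:cocycle} to each $T_k$, and telescopes, which is precisely your cancellation of interior junction facets performed in the opposite order (the paper telescopes the left-hand side first, then substitutes each cocycle relation solved for its two junction terms). Your sign bookkeeping, the grouping of the non-junction deletions into $\pi(\underline{\omega}[l],\underline{\eta}[l])$, and the $n=2$ check all match the paper's computation.
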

\begin{proof}
Here we will drop the argument $x$ from the notation. 

First we rewrite the left hand side as 
\begin{equation}\label{eq:prism1}
\begin{split}
&\chi(\omega_1,\ldots,\omega_n)-\chi(\eta_1,\ldots,\eta_n)\\
&=\sum_{k=1}^n\bigl\{\chi(\omega_1,\ldots,\omega_k,\eta_{k+1},\ldots,\eta_n)
-\chi(\omega_1,\ldots,\omega_{k-1},\eta_k,\ldots,\eta_n)\bigr\}. 
\end{split}
\end{equation}
Then for each $k=1,\ldots,n$, we apply the cocycle relation 
\eqref{eq:cocycle} to the $n+1$ vectors 
$\omega_1,\ldots,\omega_k,\eta_k,\ldots,\eta_n$ 
to obtain 
\[\begin{split}
&\sum_{l=1}^k(-1)^l\chi(\omega_1,\ldots,\check{\omega}_l,\ldots,\omega_k,
\eta_k,\ldots,\eta_n)\\
&+\sum_{l=k}^n(-1)^{l+1}\chi(\omega_1,\ldots,\omega_k,
\eta_k,\ldots,\check{\eta}_l,\ldots,\eta_n)=0, 
\end{split}\]
which amounts to 
\begin{equation}\label{eq:prism2}
\begin{split}
\chi(&\omega_1,\ldots,\omega_k,\eta_{k+1},\ldots,\eta_n)
-\chi(\omega_1,\ldots,\omega_{k-1},\eta_k,\ldots,\eta_n)\\
=&\sum_{l=1}^{k-1}(-1)^{l+k}
\chi(\omega_1,\ldots,\check{\omega}_l,\ldots,\omega_k,\eta_k,\ldots,\eta_n)\\
&+\sum_{l=k+1}^n(-1)^{l+k+1}
\chi(\omega_1,\ldots,\omega_k,\eta_k,\ldots,\check{\eta}_l,\ldots,\eta_n). 
\end{split}
\end{equation}
By substituting \eqref{eq:prism2} to \eqref{eq:prism1}, 
we obtain 
\[\begin{split}
\chi(&\omega_1,\ldots,\omega_n)-\chi(\eta_1,\ldots,\eta_n)\\
=&\sum_{1\leq l<k\leq n}(-1)^{l+k}
\chi(\omega_1,\ldots,\check{\omega}_l,\ldots,\omega_k,\eta_k,\ldots,\eta_n)\\
&+\sum_{1\leq k<l\leq n}(-1)^{l+k+1}
\chi(\omega_1,\ldots,\omega_k,\eta_k,\ldots,\check{\eta}_l,\ldots,\eta_n). 
\end{split}\]
This leads to \eqref{eq:prism}, as easily verified. 
\end{proof}

Next, we extend the generic formula \eqref{eq:prism} 
to all $x\in\Half$ when vectors are rational. 
This can be done by taking the upper or lower closures. 

For $n$-tuples $\underline{\omega}$ and $\underline{\eta}$ 
of vectors in $\Half$, we define 
\begin{gather*}
\ucl{\chi}(\underline{\omega})(x)
=\lim_{t\to +0}\chi(\underline{\omega})(x-te_h), \\
\ucl{\pi}(\underline{\omega},\underline{\eta})(x)
=\lim_{t\to +0}\pi(\underline{\omega},\underline{\eta})(x-te_h). 
\end{gather*}
Similarly, $\lcl{\chi}(\underline{\omega})$ and 
$\lcl{\pi}(\underline{\omega},\underline{\eta})$ 
are defined by replacing $x-te_h$ by $x+te_h$. 

\begin{prop}\label{prop:prism_ucl}
If $\underline{\omega}$ and $\underline{\eta}$ are 
$n$-tuples of vectors in $\Half\cap F$, then 
\begin{equation}\label{eq:prism_ucl}
\ucl{\chi}(\underline{\omega})(x)-\ucl{\chi}(\underline{\eta})(x)
=\sum_{l=1}^n(-1)^{l+1}
\ucl{\pi}\bigl(\underline{\omega}[l],\underline{\eta}[l]\bigr)(x) 
\end{equation}
holds for any $x\in\Half$. 
We also have the same formula for $\lcl{\chi}$ and $\lcl{\pi}$. 
\end{prop}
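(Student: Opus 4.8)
The plan is to reduce the closure identity to the generic identity \eqref{eq:prism} by perturbing $x$ along the vertical direction and passing to the limit. First I would assemble the finite set $\Omega=\{\omega_1,\ldots,\omega_n,\eta_1,\ldots,\eta_n\}$ of all vectors occurring in \eqref{eq:prism_ucl}, including those hidden inside the $\pi$-terms. Every function on either side of \eqref{eq:prism_ucl} is built from values $\chi(\underline{\nu})$ for various $n$-tuples $\underline{\nu}$ drawn from $\Omega$ (recall that $\pi$ of an $(n-1)$-tuple is a finite signed sum of $\chi$'s of $n$-tuples), and each such $\chi(\underline{\nu})$ is locally constant away from its discontinuity locus, which is contained in the union of cones generated by at most $n-1$ elements of $\Omega$. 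This finite union is exactly the non-generic locus for $\Omega$.

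The key geometric input is Lemma \ref{lem:NonVertical}. Since every vector of $\Omega$ lies in $F$, each of the finitely many cones in the non-generic locus spans a rational subspace $V$ of dimension $\le n-1$, so $e_h\notin V$. Consequently, for a fixed $x\in\Half$, the vertical line $\{x-te_h\mid t\in\R\}$ meets each such $V$ in at most one point, since two distinct intersection points would force $e_h\in V$. As there are only finitely many such subspaces, only finitely many values of $t$ are bad, and hence there is $\delta>0$ such that $x-te_h$ is generic with respect to $\Omega$ for all $0<t<\delta$ (and lies in $\Half$, as $x^{(h)}>0$).

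For each such $t$ I would apply the generic prism decomposition \eqref{eq:prism} at the point $x-te_h$. Along the entire segment $\{x-te_h\mid 0<t<\delta\}$ every individual $\chi(\underline{\nu})$ is constant, because the segment avoids all discontinuity loci; the same therefore holds for each $\pi$-term. Letting $t\to+0$ and invoking the definitions of $\ucl{\chi}$ and $\ucl{\pi}$, each term converges to its upper-closure value, and \eqref{eq:prism_ucl} follows termwise. The lower-closure identity is obtained verbatim upon replacing $x-te_h$ by $x+te_h$ throughout.

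The one step demanding care—and the main obstacle—is the genericity argument: one must ensure that the downward perturbation genuinely avoids the finitely many bad cones, i.e.\ that the vertical line is transverse to every relevant rational subspace. This is precisely what Lemma \ref{lem:NonVertical} supplies, and it is the reason the rationality hypothesis $\underline{\omega},\underline{\eta}\subset\Half\cap F$ is indispensable.
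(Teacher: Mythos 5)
Your proof is correct and follows essentially the same route as the paper: the paper's own proof is a two-line argument invoking Lemma \ref{lem:NonVertical} to ensure $x\pm te_h$ is generic for all sufficiently small $t>0$, then obtaining \eqref{eq:prism_ucl} from \eqref{eq:prism} by passage to the limit. Your write-up simply makes explicit the details (finiteness of the bad set of $t$, local constancy of each $\chi$-term along the vertical segment) that the paper leaves implicit.
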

\begin{proof}
Lemma \ref{lem:NonVertical} implies that 
$x\pm te_h$ is generic for sufficiently small $t>0$. 
Hence the formula is obtained by passage to the limit. 
\end{proof}

Now let us apply the above formula to rational cones. 
To do this, we fix a $d$-tuple (not only a set) of generators 
$\gen\sigma=(\omega_1,\ldots,\omega_d)$ for each rational cone $\sigma$, 
so that each $\omega_k$ is primitive in $\frb$ and 
that $\omega_1^{(h)}>\cdots>\omega_d^{(h)}$. 

When $d(\sigma)=n$, we can regard 
$\gen\sigma$ as an $n\times n$ matrix. 
We denote the sign of the determinant $\det(\gen\sigma)$ 
by $\sign(\sigma)$. Note that $\sign(\sigma)\,\ucl{\chi}(\gen\sigma)$ is 
the characteristic function of the set $\ucl{\sigma}$. 
Moreover, to give an $(n-1)$-dimensional face $\tau\prec\sigma$ 
is equivalent to give a number $l=1,\ldots,n$ such that 
$\gen\tau=(\gen\sigma)[l]$. 
In that situation, we put $\sign(\sigma,\tau)=\sign(\sigma)(-1)^{l+1}$. 

\begin{cor}\label{cor:prism_cone}
If $\eps$ is a totally positive unit, we have 
\begin{equation}
\begin{split}
&\sum_{\sigma\in\Phi_n\cup(\mu_j\Phi_n)}
\sign(\eps\sigma)\ucl{\chi}(\gen\eps\sigma)\\
&=\sum_{\sigma\in\Phi_n}\sum_{\substack{\tau\prec\sigma\\ d(\tau)=n-1}}
\sign(\sigma,\tau)\,\ucl{\pi}(\gen\eps\tau,\gen\mu_j\eps\tau). 
\end{split}
\end{equation}
The same holds for $\lcl{\chi}$ and $\lcl{\pi}$. 
\end{cor}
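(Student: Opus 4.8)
The goal is to sum the prism formula \eqref{eq:prism_ucl} over the $(n-1)$-dimensional faces of the cones in $\Phi_n$, translated by a totally positive unit $\eps$. The plan is to apply Proposition \ref{prop:prism_ucl} with $\underline{\omega}=\gen\eps\sigma$ and $\underline{\eta}=\gen\mu_j\eps\sigma$ for each $\sigma\in\Phi_n$, and then reorganize the resulting double sum. First I would record the elementary observation that multiplying a set of generators by the totally positive number $\eps^{(h)}$-preserving factor $\mu_j$ does not disturb the ordering condition $\omega_1^{(h)}>\cdots>\omega_n^{(h)}$ that we have imposed (here the assumption $h\ne j$ from \ref{subsec:RelPrelim} is essential, since it guarantees $\mu_j^{(h)}>0$ and hence that $\mu_j\eps\sigma$ still lies in $\Half$ with the generators in the same height order). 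With the generators so normalized, the identity $\gen\eps\sigma=\eps(\gen\sigma)$ and $\gen\mu_j\eps\sigma=\mu_j(\gen\eps\sigma)$ make the two $n$-tuples in each application of \eqref{eq:prism_ucl} legitimately paired off index by index.

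The core computation is to feed these paired tuples into \eqref{eq:prism_ucl}. For a fixed $\sigma\in\Phi_n$ the left side of \eqref{eq:prism_ucl} becomes $\ucl{\chi}(\gen\eps\sigma)-\ucl{\chi}(\gen\mu_j\eps\sigma)$. Multiplying through by $\sign(\eps\sigma)$ and using that $\sign(\mu_j\eps\sigma)=\sign(\eps\sigma)$ — because $\mu_j$ is totally positive on all coordinates except the $j$-th, and a single sign flip in one coordinate changes the determinant sign, so I must check carefully whether $\sign(\mu_j\eps\sigma)$ equals or negates $\sign(\eps\sigma)$ — I would convert each term on the left into the signed characteristic functions appearing in the statement. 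Summing over $\sigma\in\Phi_n$ then produces, on the left, exactly the combination
\[\sum_{\sigma\in\Phi_n}\sign(\eps\sigma)\bigl(\ucl{\chi}(\gen\eps\sigma)-\ucl{\chi}(\gen\mu_j\eps\sigma)\bigr),\]
and since $\mu_j\sigma$ ranges over $\mu_j\Phi_n$ as $\sigma$ ranges over $\Phi_n$, this is precisely the left-hand side of the asserted corollary, namely the sum over $\Phi_n\cup(\mu_j\Phi_n)$.

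For the right-hand side, the right side of \eqref{eq:prism_ucl} for each $\sigma$ is a signed sum over $l=1,\ldots,n$ of prism terms $\ucl{\pi}(\gen\eps\sigma[l],\gen\mu_j\eps\sigma[l])$. Here I would use the correspondence, already recorded just before the corollary, between the index $l$ and the $(n-1)$-dimensional face $\tau\prec\sigma$ with $\gen\tau=(\gen\sigma)[l]$: deleting the $l$-th generator of $\eps\sigma$ gives $\gen\eps\tau$, and likewise for $\mu_j\eps\tau$. The sign bookkeeping is the crux: the factor $(-1)^{l+1}$ in \eqref{eq:prism_ucl}, combined with the global $\sign(\eps\sigma)$ I have multiplied in, should assemble exactly into $\sign(\sigma,\tau)=\sign(\sigma)(-1)^{l+1}$ once I verify $\sign(\eps\sigma)=\sign(\sigma)$ (true since $\eps$ is totally positive). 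Carrying out this identification term by term converts the double sum over $(\sigma,l)$ into the double sum over pairs $(\sigma,\tau)$ with $\tau\prec\sigma$ and $d(\tau)=n-1$, which is the right-hand side of the corollary.

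The main obstacle I anticipate is the sign analysis, specifically confirming $\sign(\mu_j\eps\sigma)=\sign(\eps\sigma)$ and that the pairing of the face index $l$ with the prism sign $(-1)^{l+1}$ is consistent across the deletion operation on ordered tuples. Because $\mu_j$ has one negative coordinate, one might worry the determinant flips sign under multiplication by $\mu_j$; but $\det(\mu_j\eps\sigma)=\bigl(\prod_k\mu_j^{(k)}\bigr)\det(\eps\sigma)=N(\mu_j)\det(\eps\sigma)$ componentwise, and since $N(\mu_j)$ is a product of real embeddings with exactly one negative factor when $n$ even versus odd must be tracked — I would pin this down precisely rather than wave at it, as everything downstream depends on it. Once the signs are under control, the rest is a direct substitution of \eqref{eq:prism2}-style reindexing and a relabeling $\eps\sigma\mapsto\sigma$, with the $\lcl{\chi}$, $\lcl{\pi}$ case following verbatim by replacing $x-te_h$ with $x+te_h$ throughout.
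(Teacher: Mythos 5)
Your skeleton is the paper's own: apply Proposition \ref{prop:prism_ucl} to the pair $\underline{\omega}=\gen\eps\sigma$, $\underline{\eta}=\gen\mu_j\eps\sigma$ for each $\sigma\in\Phi_n$, multiply by $\sign(\eps\sigma)=\sign(\sigma)$, and convert the deletion index $l$ into the face $\tau$ with $\gen\tau=(\gen\sigma)[l]$, so that $\sign(\sigma)(-1)^{l+1}=\sign(\sigma,\tau)$. The gap is the sign question that you yourself flag as the crux and then leave unresolved --- and the two things you tentatively say about it are both wrong. First, $N(\mu_j)=\prod_k\mu_j^{(k)}$ has exactly one negative factor, namely $\mu_j^{(j)}$, so $N(\mu_j)<0$ regardless of the parity of $n$; there is nothing about ``$n$ even versus odd'' to track. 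Consequently $\det(\gen\mu_j\eps\sigma)=N(\mu_j)\det(\gen\eps\sigma)$ gives $\sign(\mu_j\eps\sigma)=-\sign(\eps\sigma)$, which is the \emph{opposite} of the equality $\sign(\mu_j\eps\sigma)=\sign(\eps\sigma)$ that you assert when assembling the left-hand side.

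This flip is not a detail to be checked at the end; it is essentially the entire content of the corollary beyond Proposition \ref{prop:prism_ucl}, and the paper's proof consists of exactly this remark (multiplication by $\eps$ preserves the signs, multiplication by $\mu_j$ reverses them). It is what turns the \emph{difference} produced by the prism formula into the \emph{sum} over $\Phi_n\cup(\mu_j\Phi_n)$ in the statement:
\[\sign(\eps\sigma)\bigl(\ucl{\chi}(\gen\eps\sigma)-\ucl{\chi}(\gen\mu_j\eps\sigma)\bigr)
=\sign(\eps\sigma)\,\ucl{\chi}(\gen\eps\sigma)+\sign(\mu_j\eps\sigma)\,\ucl{\chi}(\gen\mu_j\eps\sigma),\]
the minus sign being absorbed precisely because $\sign(\mu_j\eps\sigma)=-\sign(\eps\sigma)$; summing over $\sigma\in\Phi_n$ then yields $\sum_{\sigma'\in\Phi_n\cup(\mu_j\Phi_n)}\sign(\eps\sigma')\,\ucl{\chi}(\gen\eps\sigma')$ as required. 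Under the sign relation you actually wrote down, the $\mu_j\Phi_n$ terms would enter with the wrong sign, so your step ``this is precisely the left-hand side of the asserted corollary'' fails; indeed that step and your earlier assertion $\sign(\mu_j\eps\sigma)=\sign(\eps\sigma)$ contradict each other. Once the flip is established (one line, as above), the rest of your argument --- the $h\ne j$ observation guaranteeing the height ordering survives multiplication by $\mu_j$, the equality $\sign(\eps\sigma)=\sign(\sigma)$ from $N(\eps)>0$, and the reindexing of $l$ by faces $\tau$ --- is correct and identical to the paper's.
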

\begin{proof}
This is a direct consequence of Proposition \ref{prop:prism_ucl}. 
Note that the multiplication by $\eps$ leaves 
$\sign(\sigma)$ and $\sign(\sigma,\tau)$ unchanged, 
while the multiplication by $\mu_j$ changes them. 
\end{proof}

\subsection{The proof of Theorem \ref{thm:relation}}
We shall begin the proof of the relation $X_i(\frC)X_i(\mu_j\frC)=1$. 

Let us choose a basis $\underline{\eps}=(\eps_1,\ldots,\eps_{n-1})$ 
of the unit group $E_\frf$ such that 
$\eps_1^{(i)},\ldots,\eps_{n-1}^{(i)}>1$. 
If $\underline{k}=(k_1,\ldots,k_{n-1})$ is an $(n-1)$-tuple of 
non-negative integers, we write $\underline{\eps}^{\underline{k}}$ 
for the product $\eps_1^{k_1}\cdots\eps_{n-1}^{k_{n-1}}$. 

By \eqref{eq:VanishSum}, it is sufficient to prove that the function 
\[A(s)=\sum_{\sigma\in\Phi_n\cup(\mu_j\Phi_n)}
\sum_{z_\sigma\in P^u_\sigma\cap(z+\frb)}
\xi(s,z_\sigma^{(i)},\sigma^{(i)})\]
has an (at least) double zero at $s=0$, i.e.\ $\ord_{s=0}A(s)\geq 2$. 
We also consider functions similar to $A(s)$ replacing the set 
$\Phi_n\cup(\mu_j\Phi_n)$ by 
$\underline{\eps}^{\underline{k}}\bigl\{\Phi_n\cup(\mu_j\Phi_n)\bigr\}$ 
for all $\underline{k}\in\N^{n-1}$. 
Summing up them, we obtain the function 
\[B(s)=\sum_{\underline{k}=0}^\infty\Biggl\{
\sum_{\sigma\in\Phi_n\cup(\mu_j\Phi_n)}
\sum_{z_\sigma\in P^u_\sigma\cap(z+\frb)}
\xi\bigl(s,(\underline{\eps}^{\underline{k}}z_\sigma)^{(i)},
(\underline{\eps}^{\underline{k}}\sigma)^{(i)}\bigr)\Biggr\}. \]
Then it is easy to see that 
\[B(s)=\frac{1}{1-(\eps_1^{(i)})^{-s}}\cdots
\frac{1}{1-(\eps_{n-1}^{(i)})^{-s}}A(s), \]
hence it suffices to show that $\ord_{s=0}B(s)\geq 2-(n-1)=-(n-3)$. 
We will prove it by expressing $B(s)$ by $(n-2)$-fold infinite sums. 

Corollary \ref{cor:prism_cone} allows us to rewrite $B(s)$ as follows: 
\begin{align}\label{eq:EpsSum_Xi}
&B(s)\notag\\
&=\sum_{\underline{k}=0}^\infty
\sum_{\sigma\in\Phi_n\cup(\mu_j\Phi_n)}
\Biggl\{-\sum_{\beta\in\underline{\eps}^{\underline{k}}\ucl{\sigma}
\cap(z+\frb)}(\beta^{(i)})^{-s}
+(-1)^n\sum_{\beta\in\underline{\eps}^{\underline{k}}\lcl{\sigma}
\cap(-z+\frb)}(\beta^{(i)})^{-s}\Biggr\}\notag\\
&=\sum_{\underline{k}=0}^\infty
\sum_{\sigma\in\Phi_n}\sum_{\substack{\tau\prec\sigma\\ d(\tau)=n-1}}
\sign(\sigma,\tau)\,\Xi(s,\underline{\eps}^{\underline{k}}\tau). 
\end{align}
where $\Xi(s,\tau)$ is defined, 
for each rational $(n-1)$-dimensional cone $\tau$, by 
\[\begin{split}
\Xi(s,\tau)=
&-\sum_{\beta\in(z+\frb)\cap\Half}\ucl{\pi}(\gen\tau,\gen\mu_j\tau)(\beta)
\,(\beta^{(i)})^{-s}\\
&+(-1)^n
\sum_{\beta\in(-z+\frb)\cap\Half}\lcl{\pi}(\gen\tau,\gen\mu_j\tau)(\beta)
\,(\beta^{(i)})^{-s}. 
\end{split}\]

Note that $\Xi(s,\tau)$ is a finite (signed) sum of $\xi$-functions. 
Namely, if $\gen\tau=(\omega_1,\ldots,\omega_{n-1})$, 
we can write 
\[\Xi(s,\tau)=\sum_{k=1}^n(-1)^k
\sum_{z_{\tau_k}\in P_{\tau_k}^u\cap(z+\frb)}
\xi\bigl(s,z_{\tau_k}^{(i)},\tau_k^{(i)}\bigr), \] 
where $\tau_k$ is the cone generated by 
$\omega_1,\ldots,\omega_k,\mu_j\omega_k,\ldots,\mu_j\omega_{n-1}$ 
(if $d(\tau_k)<n$, the $k$-th term should be omitted). 
In particular, we have $\Xi(0,\tau)=0$ by \eqref{eq:xi(0)}. 

The final ingredient is the following simple property of 
$\sign(\sigma,\tau)$: 

\begin{lem}\label{lem:sign(sigma,tau)}
If $\sigma$ and $\sigma'$ are distinct $n$-dimensional rational cones 
and $\tau\prec\sigma,\sigma'$ is a common $(n-1)$-dimensional face, 
then $\sign(\sigma,\tau)=-\sign(\sigma',\tau)$. 
\end{lem}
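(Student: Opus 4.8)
The plan is to reduce the combinatorial sign $\sign(\sigma,\tau)$ to the sign of a single determinant depending only on $\tau$ and on the extra generator of $\sigma$, and then to read off the claim from the geometry of the two cones along their common facet.

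First I would record the key computation. Write $\gen\tau=(\eta_1,\ldots,\eta_{n-1})$ for the fixed ordered primitive generators of $\tau$, and let $u$ be the unique generator of $\sigma$ not lying in $\tau$, so that $\gen\sigma$ is obtained from $\gen\tau$ by inserting $u$ at some position $l$ (determined by the height ordering $\omega_1^{(h)}>\cdots>\omega_n^{(h)}$). Moving $u$ to the last slot costs $n-l$ transpositions, so $\sign(\sigma)=(-1)^{n-l}\sign\det(\eta_1,\ldots,\eta_{n-1},u)$; combined with $\sign(\sigma,\tau)=\sign(\sigma)(-1)^{l+1}$ this yields the position-free formula
\[\sign(\sigma,\tau)=(-1)^{n+1}\sign\det(\eta_1,\ldots,\eta_{n-1},u).\]
The point of this step is that the very same ordered tuple $\gen\tau$ occurs for both $\sigma$ and $\sigma'$: the primitivity and height conventions pin it down uniquely as a facet of either cone, so the analogous formula for $\sigma'$ involves the identical $\eta_i$'s and differs only through the extra generator $u'$ of $\sigma'$.

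With both formulas in hand, the lemma becomes the assertion that $\sign\det(\eta_1,\ldots,\eta_{n-1},u)=-\sign\det(\eta_1,\ldots,\eta_{n-1},u')$. Since $\eta_1,\ldots,\eta_{n-1}$ are linearly independent, $v\mapsto\det(\eta_1,\ldots,\eta_{n-1},v)$ is a nonzero linear functional whose sign is constant on each of the two open half-spaces bounded by the hyperplane $H=\R\tau$; neither $u$ nor $u'$ lies on $H$, since $\sigma$ and $\sigma'$ are $n$-dimensional. Thus it suffices to show that $u$ and $u'$ lie on opposite sides of $H$.

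The remaining geometric step — which I expect to be the main obstacle — is precisely this opposite-sides statement, and it is where the hypothesis that $\sigma$ and $\sigma'$ are distinct cones of the decomposition (hence with disjoint relative interiors) must be used. I would argue contrapositively: fix a point $p$ in the relative interior of $\tau$ and use the local product structure $\R^n=H\oplus\R u$ to see that a small neighborhood of $p$ meets $\sigma$ in exactly the closed half-space on the $u$-side of $H$, and likewise meets $\sigma'$ in the half-space on the $u'$-side. If $u$ and $u'$ lay on the same side, these two half-spaces would coincide near $p$, forcing $\mathrm{int}(\sigma)\cap\mathrm{int}(\sigma')\ne\emptyset$ and contradicting the disjointness of cones in the decomposition. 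Hence $u$ and $u'$ are separated by $H$, the two determinant signs are opposite, and the lemma follows.
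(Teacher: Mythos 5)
Your proposal is correct and takes essentially the same route as the paper: rewrite $\sign(\sigma,\tau)$ as the sign of a single determinant formed by the facet generators together with the extra generator of $\sigma$ (the paper inserts it in the first slot, you in the last, which agrees up to the sign $(-1)^{n+1}$ you track), and then reduce the lemma to the statement that the two extra generators lie on opposite sides of the hyperplane $\R\tau$. The only difference is that where the paper dismisses this last geometric fact as ``easily verified,'' you supply the actual argument and correctly flag that it needs the cones to be disjoint (as distinct cones of the fan $\tilde{\Phi}$ indeed are) --- a hypothesis the lemma's literal wording leaves implicit, since two distinct cones on the \emph{same} side of $\R\tau$ would violate the conclusion.
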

\begin{proof}
Let $\gen\tau=(\omega_1,\ldots,\omega_{n-1})$, 
and $\gen\sigma=\gen\tau\cup\{\omega\}$ as a set. 
Then the definition of $\sign(\sigma,\tau)$ can be read as 
\[\sign(\sigma,\tau)
=\sign\bigl(\det(\omega,\omega_1,\ldots,\omega_{n-1})\bigr). \]
Similarly, $\sign(\sigma',\tau)
=\sign\bigl(\det(\omega',\omega_1,\ldots,\omega_{n-1})\bigr)$, 
where $\omega'$ is the other generator of $\sigma'$. 
Hence the lemma claims that $\omega$ and $\omega'$ lie 
in opposite sides of the hyperplane spanned by $\omega_1,\ldots,\omega_{n-1}$, 
which is easily verified. 
\end{proof}

Now let us look at the sum \eqref{eq:EpsSum_Xi}. 
If $\sigma\in\Phi_n$ and $\tau\prec\sigma$ is an $(n-1)$-dimensional face, 
there is a unique $\sigma'\in\tilde{\Phi}_n$ distinct from $\sigma$ 
of which $\tau$ is a face. Then we also have a unique $\lambda\in E_\frf$ 
such that $\lambda\sigma'\in\Phi_n$. 
By pairing $(\sigma,\tau)$ and $(\lambda\sigma',\lambda\tau)$ together, 
the corresponding part of the sum \eqref{eq:EpsSum_Xi} becomes 
\begin{align*}
&\sum_{\underline{k}=0}^\infty\Bigl\{
\sign(\sigma,\tau)\Xi(s,\underline{\eps}^{\underline{k}}\tau)
+\sign(\lambda\sigma',\lambda\tau)
\Xi(s,\lambda\underline{\eps}^{\underline{k}}\tau)\Bigr\}\\
&=\sign(\sigma,\tau)\Biggl\{\sum_{\underline{k}=0}^\infty
\Xi(s,\underline{\eps}^{\underline{k}}\tau)
-\sum_{\underline{k}=\underline{m}}^\infty
\Xi(s,\underline{\eps}^{\underline{k}}\tau)\Biggr\}, 
\end{align*}
where $\underline{m}\in\Z^{n-1}$ is determined by 
$\lambda=\underline{\eps}^{\underline{m}}$. 
These two $(n-1)$-fold infinite sums almost cancel each other out, 
the remainder being a finite number of $(n-2)$-fold infinite sums, 
each of which can be written as 
\begin{align*}
\sum_{\underline{k}\in\N^{n-2}}\Xi(s,\underline{\lambda}^{\underline{k}}\tau)
=\frac{1}{1-(\lambda_1^{(i)})^{-s}}\cdots\frac{1}{1-(\lambda_{n-2}^{(i)})^{-s}}
\Xi(s,\tau)
\end{align*}
where $\underline{\lambda}=(\lambda_1,\ldots,\lambda_{n-2})$ is 
an $(n-2)$-tuple of units. 
Hence the sum \eqref{eq:EpsSum_Xi} has a pole of order at most $(n-3)$ 
at $s=0$, and the proof of Theorem \ref{thm:relation} is complete.


\end{document}